\newtheorem{thm}{Theorem}%[section]
\newtheorem{lem}[thm]{Lemma}
\newtheorem{prp}[thm]{Proposition}
\newtheorem{question}[thm]{Question}
\newcommand{\Ord}{\operatorname{Ord}}
\title{Cops, robbers, and infinite graphs}
\author{Florian Lehner\thanks{The author acknowledges the support of the Austrian Science Fund (FWF), project W1230-N13.}}
\begin{document}
\maketitle

\begin{abstract}
Cops and robbers is a game between two players, where one tries to catch the other by moving along the edges of a graph. It is well known that on a finite graph the cop has a winning strategy if and only if the graph is constructible and that finiteness is necessary for this result.

We propose the notion of weakly cop-win graphs, a winning criterion for infinite graphs which could lead to a generalisation. In fact, we generalise one half of the result, that is, we prove that every constructible graph is weakly cop-win. We also show that a similar notion studied by Chastand et al.\ (which they also dubbed weakly cop-win) is not sufficient to generalise the above result to infinite graphs.

In the locally finite case we characterise the constructible graphs as the graphs for which the cop has a so-called protective strategy and prove that the existence of such a strategy implies constructibility even for non-locally finite graphs.
\end{abstract}

\section{Introduction}
\label{sec:intro}
The game of cops and robbers was first studied by Nowkowski and Winkler~\cite{zbMATH03801591} and Quilliot~\cite{quilliot-thesis,quilliot-habil}. Since its first appearance numerous variants and aspects of the game have been studied. The book~\cite{zbMATH05945963} by Bonato and Nowakowski gives a fairly good overview.

The game is played on the vertex set of a graph between two players called the cop and the robber. Both players have perfect information. They alternately take turns, with the cop going first. In the first round their move is selecting a starting vertex. In each consecutive round they can either move to a neighbour of the vertex they are at, or stay where they are.

The goal of the cop is to ``catch'' the robber, that is, to occupy the same vertex as the robber after finitely many steps. The robber wins if he can avoid being caught forever. Since one of the two events must happen it is clear that one of the two players has a winning strategy.

While naturally the focus has been mostly on finite graphs there also have been several publications treating the infinite case \cite{zbMATH05648096, zbMATH05826379, zbMATH01533339, zbMATH01839020, zbMATH00716426, zbMATH01417400, zbMATH01968008}.

One of the reasons why infinite graphs have not received more attention is that many very basic results break down as soon as we leave the realm of finite graphs. The most striking example of this is probably that graphs which contain an isometric copy of an infinite path can never be cop-win. Even worse, the game cannot be won by any finite number of cops on such a graph. The robber's strategy would simply consist of starting ``further out'' along the path than any cop and then running away in a straight line. Clearly the cops can never catch up and hence the robber can avoid being captured forever. 

But even if we remove this obvious obstruction, results fail to generalise. For example, Hahn et al.~\cite{zbMATH01839020} showed that there are infinite chordal graphs of diameter $2$ which are not cop-win. This contrasts the fact that finite chordal graphs are always cop-win.

In this paper we study an altered winning criterion which seems to be better adapted to infinite graphs. It coincides with the original winning criterion for finite graphs, but allows to generalise some results to infinite graphs. We only consider the most basic version of the game where one cop tries to catch one robber, but it is certainly possible that a similar approach works for more general variants of the game as well.

The main motivation for this paper is that there does not seem to be a counterpart for the following necessary and sufficient condition for the existence of a winning strategy of the cop. The condition was discovered independently by Nowkowski and Winkler~\cite{zbMATH03801591} and Quilliot~\cite{quilliot-thesis}.
\begin{thm}
\label{thm:nowakowski}
A finite graph is cop-win if and only if it is constructible.
\end{thm}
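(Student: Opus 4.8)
The plan is to prove both implications by induction on the number of vertices, using the notion of a \emph{dominated} vertex (a vertex $v$ for which there is some $u \neq v$ with $N[v] \subseteq N[u]$, where $N[\cdot]$ denotes the closed neighbourhood) as the common engine. Recall that $G$ is constructible precisely when one can repeatedly delete a dominated vertex and reach a single vertex; equivalently, $G$ has a dominated vertex $v$ such that $G - v$ is again constructible. Throughout I would adopt the convention that staying put is a legal move, so that $N[v]$ records exactly the moves available from $v$ and the map fixing every vertex except a dominated $v$, which it collapses onto a dominator $u$, is a (reflexive) graph homomorphism, i.e.\ a retraction $f \colon G \to G - v$. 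This retraction is what lets me pass between the combinatorial condition and the game.

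For ``constructible $\Rightarrow$ cop-win'' I would induct on $|V(G)|$, the single-vertex case being immediate. For the step, pick a dominated vertex $v$ with $N[v] \subseteq N[u]$, so $G - v$ is constructible and hence cop-win by hypothesis; let $f$ be the associated retraction. The cop plays on $G$ while tracking the \emph{shadow} $f(r)$ of the robber position $r$: since $f$ is a homomorphism, each legal move $r \to r'$ induces a legal move $f(r) \to f(r')$ on $G - v$, so the cop may run the winning $(G-v)$-strategy against this shadow and in finitely many rounds attain a position $c = f(r)$. If $r \neq v$ then $c = f(r) = r$ and the robber is caught; if $r = v$ then $c = u$, and after the robber's next move to some $r' \in N[v] \subseteq N[u] = N[c]$ the cop simply steps from $u$ onto $r'$. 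Either way the cop wins.

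For the converse I would again induct, after two lemmas. First, a finite cop-win graph contains a dominated vertex; I would prove the contrapositive by exhibiting a robber escape when none exists. If no vertex is dominated then $N[c] \neq V(G)$ for every $c$, so the robber can start outside $N[c]$; and on any robber turn with $c \neq r$, non-domination of $r$ (witnessed by $u = c$) yields $r' \in N[r] \setminus N[c]$, to which the robber moves. As $r' \notin N[c]$, no subsequent cop move reaches $r'$, so the robber survives every round and thus wins. Second, deleting a dominated vertex preserves cop-win: $G - v$ is a retract of $G$ via $f$, and the cop can project its $G$-strategy through $f$ to a legal $(G-v)$-strategy that catches any robber confined to $G - v$ (when the virtual cop meets the robber at $r \in G - v$, its image $f(r) = r$ is the actual cop's position). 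Combining the two lemmas with the inductive hypothesis applied to $G - v$ gives constructibility.

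The main obstacle, and the step where finiteness is genuinely used, is the first lemma of the converse: deducing combinatorial structure (a corner) from the purely game-theoretic hypothesis of being cop-win. The escape argument works locally and terminates the analysis in one round, but its conclusion that the robber \emph{wins} rests on ``surviving forever equals winning'', which is exactly the feature that collapses in the infinite setting, where an isometric ray already defeats any corner-based reduction. I would therefore treat the two retraction lemmas as routine bookkeeping and concentrate care on the corner-extraction and on fixing the reflexive-homomorphism conventions, so that all three simulation arguments share one formalism.
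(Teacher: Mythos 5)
Your proof is correct and takes essentially the same route as the paper: induction on the number of vertices, the shadow/retraction argument (collapsing the dominated vertex $v$ onto its dominator $u$) for constructible $\Rightarrow$ cop-win, and extraction of a dominated vertex together with the fact that deleting it preserves cop-win for the converse. The only cosmetic difference is that you obtain the dominated vertex via the contrapositive escape argument (no dominated vertex $\Rightarrow$ the robber survives forever), whereas the paper reads it off the position before the last move of an optimally playing robber; these are the same local analysis, and your rendering is if anything the more rigorous of the two.
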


Constructible here means that $G$ can be constructed according to certain rules which will be explained in the next section. The characterisation does not remain valid for infinite graphs. One reason for this is that as mentioned before graphs with an isometric copy of an infinite path can never be cop-win.

While this problem was already addressed by Chastand~et~al.~\cite{zbMATH01533339}, their proposed solution turns out to be unsatisfactory. They introduced the notion of C-weakly cop-win graphs\footnote{Of course, Chastand~et~al.\ call them weakly cop-win. The reason we call them C-weakly cop-win is that we would like to reserve the term weakly cop-win for the new winning criterion introduced in Section \ref{sec:weaklynew}.} where the cop wins the game if he can either catch the robber or chase him away. They proved that certain infinite constructible graphs are cop-win in this sense and asked whether the C-weakly cop-win graphs are exactly the constructible graphs.

In this paper we show that this is not the case if we use their exact definition. However, taking a slightly modified definition of weakly cop-win graphs which follows the same intuition we are able to prove that the cop has a winning strategy on every constructible graph.

We also introduce protective strategies, which are a special kind of winning strategies and show that every graph which admits such a protective strategy is constructible. For locally finite graphs we can even show that being constructible is equivalent to the existence of a protective strategy.

Finally we investigate dismantable graphs (which in the finite case are exactly the constructible graphs) and give a sufficient condition for an infinite dismantable graph to be weakly cop-win.

The rest of this paper is structured as follows. After introducing some basic notions, we outline the rules of the game and give a proof of Theorem~\ref{thm:nowakowski}. We briefly discuss, why a similar characterisation is not possible for infinite cop-win graphs.  In Section~\ref{sec:weakly} we outline the approach of Chastand~et~al.\ and show that there is a locally finite constructible graph which is not C-weakly cop-win. We then proceed to introduce our modified definition of weakly cop-win graphs and prove the results mentioned above.

The main result of Section \ref{sec:construct} and probably of the whole paper is Theorem \ref{thm:weaklycopwin} which states that every constructible graph is weakly cop-win. In Section \ref{sec:protective} we introduce the protective strategies mentioned earlier and show that a locally finite graph is constructible if and only if it admits a protective strategy. Section \ref{sec:dismantable} contains some results on dismantable graphs. We conclude the paper with some interesting open questions about weakly cop-win graphs.

\section{Basic notions and auxiliary results}

All graph theoretical notions which are not explicitly defined will be taken from \cite{MR2159259}. Since throughout most of this paper the vertex sets of the graphs in consideration are well ordered, we start by recalling some facts about well orders and ordinal numbers.

It is well known that every well order is order isomorphic to some ordinal number. We denote by $\Ord$ the set of ordinal numbers. Recall that the ordinals themselves are ordered with respect to being an initial piece of one another. This order is such that we can identify $\alpha$ with $\{\beta \in \Ord \mid \beta < \alpha\}$ by means of an order isomorphism.

We denote by $\mathbb N= \{0,1,2,3,\ldots\}$ the smallest infinite ordinal (which is identical to the set of finite ordinals). Note that we do not distinguish between the set of finite ordinals and the set of non-negative integers. This can be justified by the fact that there is an order isomorphism between the two sets through which we can identify them. If $\alpha \in \Ord$ we denote by $\alpha+1 = \alpha \cup \{\alpha\}$ its successor. For a successor ordinal $\alpha$ we denote by $\alpha -1$ its predecessor.  If an order is order isomorphic to $\alpha \leq \mathbb N$ we call it a \emph{natural order}.

Let $G=(V,E)$ be a graph and let $V$ be well ordered. Then there is an order isomorphism $\phi \colon V \to \alpha$ for some $\alpha \in \Ord$. Hence we can without loss of generality assume that $V = \alpha = \{\nu \in \Ord \mid \nu < \alpha\}$ and that the well order on $V$ is the same as the order of $\alpha$. In particular, the smallest vertex will always be $0$. We denote by $G_{<\nu}$ the subgraph of $G$ induced by all $\gamma < \nu$ and by  $G_{\leq\nu}$ the subgraph of $G$ induced by all $\gamma \leq \nu$. For example, $G_{<0}$ is the empty graph, $G_{<1}$ consists only of the vertex $0$, and $G_{<V} =G$.

All graphs considered in this paper are undirected and reflexive, that is, there is at least one loop attached to each vertex. This is done mostly because it is convenient for defining the rules of the cops-and-robbers-game. Staying at a vertex simply amounts to taking a step along a loop in this vertex.

Let $G,G'$ be graphs. A \emph{homomorphism} from $G$ to $G'$ is an adjacency preserving map between the vertex sets. A \emph{retraction} is a graph homomorphism from $G$ onto a subgraph $H$ of $G$ whose restriction on $H$ is the identity. We say that $H$ is a \emph{retract} of $G$ if there is a retraction $\phi\colon G \to H$. Note that since all graphs in this paper are reflexive it is possible that adjacent vertices are mapped to the same vertex. In particular, every subgraph consisting of a single vertex with a loop attached is a retract of $G$.

Let $G=(V,E)$ be a graph and let $\nu,\mu \in V$. We say that $\mu \neq \nu$ \emph{dominates} $\nu$ if $\mu$ is adjacent to $\nu$ and all of its neighbours. 

A graph $G$ is called \emph{constructible} if there is a well order $<$ of its vertex set $V$, such that each vertex $\nu$ is dominated in $G_{\leq \nu}$. In other words, $G$ can be constructed from a single vertex by recursively adding dominated vertices. An order $<$ with the aforementioned properties is called a \emph{dominating order}. A map $\delta$ which maps every vertex $\nu$ to a vertex which dominates $\nu$ in $G_{\leq \nu} $ is called a \emph{domination map} associated to the dominating order $<$.

The following lemma will be used later.

\begin{lem}
\label{lem:finitedominationchain}
Let $G=(V,E)$ be a constructible graph with dominating order $<$ and an associated domination map $\delta$. For every $\nu \in V$  there is $k \in \mathbb N$ such that $\delta^k(\nu) = 0$.
\end{lem}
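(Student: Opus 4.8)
The plan is to exploit the fact that the domination map $\delta$ strictly decreases with respect to the well order $<$, together with the fact that a well order admits no infinite strictly descending chain.

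First I would record the key property of $\delta$. For any vertex $\nu > 0$, the vertex $\delta(\nu)$ dominates $\nu$ in $G_{\leq\nu}$. By the definition of domination this forces $\delta(\nu) \neq \nu$, and since $\delta(\nu)$ is a vertex of $G_{\leq\nu}$ we also have $\delta(\nu) \leq \nu$. The two inequalities together give $\delta(\nu) < \nu$. Hence $\delta$ strictly decreases every vertex except the least vertex $0$, which is the only vertex it cannot dominate from below.

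Now fix an arbitrary $\nu \in V$ and iterate: set $\nu_0 = \nu$ and, as long as $\nu_i \neq 0$, put $\nu_{i+1} = \delta(\nu_i)$. By the previous paragraph each step is a strict descent, so this produces a strictly descending chain $\nu_0 > \nu_1 > \nu_2 > \cdots$. Because $<$ is a well order it has no infinite strictly descending chain, so the construction must halt after finitely many steps; and by its very definition it can only halt when some $\nu_k$ equals $0$. This $k$ is a genuine natural number, and $\delta^k(\nu) = \nu_k = 0$, as required.

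The only subtlety worth flagging is that the statement insists $k \in \mathbb N$, that is, that $0$ is reached in \emph{finitely} many steps, even though $V$ itself may be a long well order. This is precisely what the no-infinite-descending-chain property of well orders delivers, so no real obstacle arises. Equivalently, one may observe that the orbit $\{\delta^k(\nu) \mid k \in \mathbb N\}$ is a nonempty subset of $V$ and hence has a least element $\delta^m(\nu)$; minimality forces $\delta^m(\nu) = 0$, since otherwise $\delta^{m+1}(\nu)$ would be strictly smaller and still lie in the orbit. The one point to treat cleanly is the exceptional behaviour at $0$, where $\delta$ neither needs to nor can decrease, and where the iteration terminates.
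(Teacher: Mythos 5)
Your proof is correct and follows essentially the same argument as the paper: $\delta$ strictly decreases every nonzero vertex with respect to the dominating order, and a well order admits no infinite strictly descending chain, so the iteration reaches $0$ after finitely many steps. Your additional care at the vertex $0$ and the alternative ``least element of the orbit'' phrasing are fine elaborations of the same idea, not a different route.
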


\begin{proof}
The statement follows from the facts that $\delta(\nu) < \nu$ for every $\nu$ and that there are no infinite descending chains in a well order. Hence after finitely many iterations we must arrive at the minimal element $0$.
\end{proof}

Next we would like to construct a retraction from $G$ onto $G_{<\nu}$. Define a map $\rho_\nu$ by
\[
	\rho_\nu(\mu) = \delta^{k(\nu,\mu)}(\mu),
\]
where
\[
k(\nu,\mu) = \min\{k \in \mathbb N \mid \delta^k(\mu) < \nu\}.
\] 
Note that by the above lemma this value is well defined and finite for every $\nu \geq 1$ because in this case $0 < \nu$.

\begin{lem}
\label{lem:retraction}
The map $\rho_\nu$ is a retraction from $G$ onto $G_{<\nu}$.
\end{lem}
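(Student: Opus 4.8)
The plan is to show that $\rho_\nu$ is a well-defined map into $G_{<\nu}$, that it fixes $G_{<\nu}$ pointwise, and that it preserves adjacency; together these three facts say exactly that $\rho_\nu$ is a retraction onto $G_{<\nu}$. The first two points are essentially bookkeeping with the definition of $k(\nu,\mu)$. For a vertex $\mu$, the value $k(\nu,\mu)$ is the least $k$ with $\delta^k(\mu) < \nu$, so by construction $\rho_\nu(\mu) = \delta^{k(\nu,\mu)}(\mu) < \nu$, meaning $\rho_\nu$ does land in $G_{<\nu}$. Moreover, if $\mu < \nu$ already, then $k(\nu,\mu) = 0$ and $\rho_\nu(\mu) = \mu$, so $\rho_\nu$ restricts to the identity on $G_{<\nu}$. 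This also immediately gives idempotence, since $\rho_\nu(\mu) < \nu$ forces $\rho_\nu(\rho_\nu(\mu)) = \rho_\nu(\mu)$.

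The substance of the proof is the adjacency-preserving (homomorphism) property, and I expect this to be the main obstacle. I would reduce it to a single-step claim: if $\mu$ and $\mu'$ are adjacent, then $\rho_\nu(\mu)$ and $\rho_\nu(\mu')$ are adjacent. The natural approach is induction, and the right quantity to induct on is $\max(k(\nu,\mu), k(\nu,\mu'))$, i.e.\ the total number of $\delta$-steps we must still perform. The base case is when both exponents are $0$, i.e.\ both $\mu,\mu' < \nu$; then $\rho_\nu$ fixes both and adjacency is trivially preserved. For the inductive step, suppose without loss of generality that $\mu \geq \mu'$ and that $k(\nu,\mu) \geq 1$, so $\mu \geq \nu > 0$. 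The key observation is that $\mu = \max(\mu,\mu')$, so both $\mu'$ and $\mu$ lie in $G_{\leq \mu}$, and $\delta(\mu)$ dominates $\mu$ in $G_{\leq\mu}$; since $\mu'$ is a neighbour of $\mu$ lying in $G_{\leq\mu}$, domination gives that $\delta(\mu)$ is adjacent to $\mu'$ (and to $\mu$ itself).

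The crucial point is then that applying one step of $\delta$ to the larger vertex does not increase the relevant exponents and keeps the two images adjacent, so the induction can proceed. Concretely, replacing $\mu$ by $\delta(\mu)$ yields a pair $\delta(\mu), \mu'$ which is again adjacent (by the domination just noted), and one checks that $k(\nu,\delta(\mu)) = k(\nu,\mu) - 1$ while $\rho_\nu(\delta(\mu)) = \rho_\nu(\mu)$ and $\rho_\nu(\mu') $ is unchanged; thus the maximum of the two exponents has strictly decreased and the inductive hypothesis applies to the pair $(\delta(\mu), \mu')$, giving that $\rho_\nu(\delta(\mu)) = \rho_\nu(\mu)$ is adjacent to $\rho_\nu(\mu')$, as required. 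The delicate bookkeeping here is making sure the vertex we peel a $\delta$ off is always the \emph{larger} of the two, so that domination in $G_{\leq \mu}$ is available and the reflexivity of the graph handles the case $\delta(\mu) = \mu'$ cleanly. Once the one-step adjacency claim is established, iterating it along the $\delta$-chains of both endpoints (which are finite by Lemma~\ref{lem:finitedominationchain}) completes the proof that $\rho_\nu$ is a homomorphism, and combined with the first paragraph we conclude that $\rho_\nu$ is a retraction onto $G_{<\nu}$.
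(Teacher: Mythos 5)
Your overall strategy is sound and genuinely different from the paper's: the paper proves the homomorphism property by transfinite induction on the initial segments $G_{<\gamma}$, $\nu \leq \gamma \leq V$, with separate limit-ordinal and successor-ordinal cases, whereas you argue locally on a single adjacent pair, peeling off $\delta$-steps. Your first paragraph (well-definedness, identity on $G_{<\nu}$, idempotence) is fine, and your key observation is exactly right: for adjacent $\mu \geq \mu'$ with $k(\nu,\mu)\geq 1$ we have $\mu \geq \nu$, the vertex $\delta(\mu)$ dominates $\mu$ in $G_{\leq\mu}$, and since $\mu'$ lies in $G_{\leq\mu}$ it is adjacent to $\delta(\mu)$ (or equal to it, which reflexivity handles), while $\rho_\nu(\delta(\mu)) = \rho_\nu(\mu)$.

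However, your induction measure is wrong, and this is a genuine gap as written. You claim that passing from $(\mu,\mu')$ to $(\delta(\mu),\mu')$ strictly decreases $\max\bigl(k(\nu,\mu), k(\nu,\mu')\bigr)$. That fails whenever the larger \emph{vertex} carries the smaller \emph{exponent}: you are forced to peel the larger vertex $\mu$ (only there is domination available), but if $k(\nu,\mu) < k(\nu,\mu')$ then the maximum equals $k(\nu,\mu')$, which the step does not touch, so the inductive hypothesis does not apply to the new pair. Nothing in the definition of a dominating order makes $k(\nu,\cdot)$ monotone in the well-order; for instance one can have adjacent vertices $\mu > \mu' \geq \nu$ with $\delta(\mu) < \nu$ (so $k(\nu,\mu)=1$) while $\mu'$ needs two $\delta$-steps to drop below $\nu$. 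The repair is immediate and your argument needs no other change: induct on the \emph{sum} $k(\nu,\mu)+k(\nu,\mu')$, which drops by exactly one in each step (note that if the sum is positive and $\mu \geq \mu'$, then $k(\nu,\mu) \geq 1$ automatically, since $k(\nu,\mu)=0$ would force $\mu' \leq \mu < \nu$ and hence sum zero); alternatively, do transfinite induction on $\max(\mu,\mu')$ in the well-order, which strictly decreases because $\delta(\mu)<\mu$, with the loop case $\mu = \mu'$ dispatched separately by reflexivity. With either measure your peeling argument goes through and yields a proof that is arguably more elementary than the paper's, since it avoids the limit-ordinal case entirely.
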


\begin{proof}
First observe that $k(\nu,\mu) =0$ if  $\mu < \nu$. So in this case $\rho_\nu(\mu) = \mu$ and thus the restriction of $\rho_\nu$ to $G_{<\nu}$ is indeed the identity map.

It remains to show that $\rho$ is a homomorphism, that is, adjacent vertices map to adjacent vertices or to the same vertex. This is done by transfinite induction. More precisely, for every $\gamma \in \Ord$ such that $\nu \leq \gamma \leq V$ we show that the restriction of $\rho_\nu$ to $G_{<\gamma}$ is a homomorphism from $G_{<\gamma}$ to $G_{<\nu}$.

For $\gamma = \nu$ we get the identity map which clearly is a homomorphism. Now assume that the statement was true for every $\gamma' < \gamma$. If $\gamma$ is a limit ordinal, then each edge of $G_{<\gamma}$ appears in some $G_{<\gamma'}$ for $\gamma' < \gamma$ and hence is mapped to an edge of $G_{<\nu}$. 

On the other hand, if $\gamma$ is a successor ordinal, then $G_{<\gamma}$ is obtained from the predecessor $G_{<\gamma-1}$ by adding the vertex $\gamma-1$. Furthermore there is a vertex $\mu = \delta(\gamma-1)$ which dominates $\gamma-1$ in $G_{<\gamma}$. By definition of $\rho_\nu$ it follows that $\rho_\nu(\gamma-1) = \rho_\nu(\mu)$. Since every edge incident to $\mu$ is mapped to an edge of $G_{<\nu}$ and the neighbours of $\gamma-1$ are a subset of the neighbours of $\mu$ we conclude that every edge incident to $\gamma-1$ is also mapped to an edge of $G_{<\nu}$. Since these are the only edges of $G_{<\gamma}$ which were not present in $G_{<\gamma-1}$ this completes the induction step and thus also the proof of the lemma.
\end{proof}

Sometimes it is beneficial to have a natural order rather than an arbitrary well order. The following lemma shows, that in the case of locally finite graphs we always can restrict ourselves to natural dominating orders.

\begin{lem}
\label{lem:naturalorder}
A locally finite graph is constructible if and only if it admits a natural dominating order.
\end{lem}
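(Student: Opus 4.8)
The backward implication is immediate: a natural order is in particular a well order, so a natural dominating order already witnesses constructibility. For the forward implication, suppose $G$ is locally finite and constructible, and fix a dominating order $<$ with an associated domination map $\delta$. The first step is to reduce to a countable vertex set. By Lemma~\ref{lem:finitedominationchain} every vertex $\nu$ is joined to $0$ by the finite path $\nu, \delta(\nu), \delta^2(\nu), \dots, 0$ (consecutive vertices are adjacent because $\delta(\mu)$ dominates $\mu$), so $G$ is connected; being also locally finite, it is countable. If $V$ is finite there is nothing to prove, so I assume $V$ is countably infinite.

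The plan is not to modify $<$ locally but to replace it wholesale by a cleverly chosen order of type $\omega$. To this end I would consider the relation that orients each edge according to $<$, namely $u \mathrel{R} v$ whenever $u$ and $v$ are adjacent and $u < v$, and let $P$ be its transitive closure, a strict partial order on $V$. The heart of the argument is the claim that every principal down-set $D(\nu) = \{\mu : \mu \mathrel{P} \nu\}$ is finite. This is exactly where local finiteness enters, and it follows by transfinite induction along $<$: writing $N^-(\nu)$ for the finite set of neighbours of $\nu$ that precede it in $<$, one has $D(\nu) = N^-(\nu) \cup \bigcup_{\mu \in N^-(\nu)} D(\mu)$, a finite union of sets that are finite by the induction hypothesis. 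A countable partial order all of whose down-sets are finite admits a linear extension $\prec$ of order type at most $\omega$: fixing an enumeration $x_0, x_1, \dots$ of $V$ and exhausting $V$ by the increasing sequence of finite, $P$-downward closed sets $S_n = \bigcup_{i \le n} (D(x_i) \cup \{x_i\})$, one extends a linear extension of $P|_{S_n}$ to one of $P|_{S_{n+1}}$ by appending the finitely many new elements in a compatible order; since later-added elements are always appended, each vertex ends up with only finitely many $\prec$-predecessors, so $\prec$ is natural.

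It then remains to check that $\prec$ is a dominating order, with the same map $\delta$ serving as a domination map. Since $\prec$ extends $P \supseteq R$, for every $\nu \neq 0$ we have $\delta(\nu) \prec \nu$, and, crucially, every neighbour $\mu$ of $\nu$ with $\mu \prec \nu$ must satisfy $\mu < \nu$ -- otherwise $\nu \mathrel{R} \mu$ would force $\nu \prec \mu$. Hence every $\prec$-predecessor of $\nu$ adjacent to it already lies in $G_{<\nu}$, where $\delta(\nu)$ dominates $\nu$; consequently $\delta(\nu)$ is adjacent to $\nu$ and to all of its neighbours in the $\prec$-initial segment below $\nu$, so $\delta(\nu)$ dominates $\nu$ in the corresponding induced subgraph, and $\prec$ is a natural dominating order. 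The main obstacle I anticipate is precisely this passage from the possibly large order type of $<$ to type $\omega$: the naive idea of inserting vertices one at a time just after their already-placed neighbours can still yield an order of type exceeding $\omega$ (for instance on two rays glued at $0$ and presented in an unfavourable order), so the essential point is to isolate the order-theoretic content in the finiteness of the down-sets of $P$ and let a general linear-extension argument perform the compression.
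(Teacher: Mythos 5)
Your proof is correct, but it compresses the well order to type $\omega$ by a different mechanism than the paper. The paper defines, by transfinite recursion, a rank $n(0)=0$ and $n(\nu)=\max\{n(\mu) : \mu\in N(\nu),\ \mu<\nu\}+1$, and takes $\prec$ to be any order that sorts the vertices by rank; naturality follows because $n(\nu)$ is at least the distance from $0$ to $\nu$, so local finiteness (together with connectivity via the domination chains) puts only finitely many vertices in each level. In your language, $n$ is exactly the height function of your poset $P$, and the paper's $\prec$ is one particular linear extension of $P$ of type at most $\omega$. The two proofs thus share the same core observation: any linear extension of the edge-orientation order keeps the $\prec$-earlier neighbourhoods inside the $<$-earlier ones, hence preserves domination (the paper even proves equality of the two neighbourhood sets, where you only need, and only prove, containment). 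Where they differ is the finiteness argument: the paper gets finiteness of each rank level from the distance bound, while you get finiteness of the principal down-sets of $P$ by transfinite induction on $<$ and then invoke a general order-theoretic compression lemma (a countable poset with finite principal down-sets has a linear extension of type at most $\omega$). The paper's route is shorter and more concrete; yours isolates a reusable lemma, makes explicit exactly where local finiteness enters (finiteness of $N^-(\nu)$), and correctly flags why naive one-at-a-time insertion would not suffice, at the cost of the extra bookkeeping of the sets $S_n$ and a separate countability argument.
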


\begin{proof}
The ``if''-part is trivial, hence we only need to show that every constructible graph admits a natural dominating order. 

For this purpose let $<$ be an arbitrary dominating order. We assign a number $n(\nu)$ to each vertex $\nu$ by transfinite recursion: $n(0)=0$, and
\[n(\nu) = \max_{\substack{\mu \in N(\nu)\\\mu<\nu}} n(\mu) +1\] 
for all other vertices, where $N(\nu)$ denotes (as usual) the neighbourhood of the vertex $\nu$ in $G$. Note that this maximum exists and is an integer since every vertex has only finitely many neighbours.

Order the vertices by their values of $n(\nu)$ where vertices with the same value are put in arbitrary order and denote the resulting order by $\prec$. We claim that $\prec$ is the desired dominating order.

It is easy to see that $\prec$ is order isomorphic to $\mathbb N$. Simply observe that $n(\nu)$ is at least as big as the distance between $\nu$ and $0$. Hence for each value $k$ there are only finitely many vertices with $n(\nu) = k$ and thus only finitely many vertices can appear before a given vertex in the order.

Finally observe that the neighbours of any vertex $\nu$ in $G_{\preceq \nu}$ are exactly the same as in $G_{\leq \nu}$ because every neighbour $\mu$ of $\nu$ with $\mu>\nu$ has $n(\mu) \geq n(\nu)+1$. This implies that $\nu$ is dominated in $G_{\preceq \nu}$ by the same vertex as in $G_{\leq \nu}$. Since this is true for every vertex we have established that $\prec$ is dominating.
\end{proof}

\section{The rules of the game}
\label{sec:notations}

The game of cops and robbers is a perfect information game played by 2 players---the cop and the robber. Following the notation of Bonato and Nowakowski~\cite{zbMATH05945963}, throughout this paper the cop will be female while the robber will be male. The players take turns in discrete time steps or \emph{rounds} indexed by the non-negative integers. The cop plays in rounds with an even index while the robber plays in rounds with an odd index. The rules of the game are as follows. 

First both players must choose their respective starting points, the cop in round 0 and the robber in round 1. In each subsequent round the players can move along one edge. It should be clear from the context what we mean when we say the cop is at (or occupies) a vertex $v \in V$ after round $n \in \mathbb N$ and the like. 

The cop wins the game, if after some finite number of rounds both players occupy the same vertex, otherwise the robber wins. 

Since one or the other must happen it follows from von Neumann's theorem that one of the two players has a winning strategy. A \emph{strategy} of the cop in this context is a function $S\colon V \times V \to V$ which assigns to $(c,r)$ a neighbour of $c$. If it is the cop's turn and the cop occupies $c$ and the robber occupies $r$ then the cop moves to $S(c,r)$.  A strategy of the cop is called \emph{winning} if following this strategy will always lead to a win of the cop, no matter what the robber does. Call a graph $G$ \emph{cop-win} if the cop has a winning strategy on $G$. Otherwise call it \emph{robber-win}.

 It is an easy observation that if $c$ dominates $r$, then the cop can win the game on her next turn whenever she occupies $c$ and the robber occupies $r$. In the light of this it is not surprising that constructible graphs are cop-win. The following classical result which was already mentioned in Section~\ref{sec:intro} was discovered independently by Nowakowski and Winkler~\cite{zbMATH03801591} and Quilliot~\cite{quilliot-thesis}. We include a short proof, not only for the convenience of the reader, but also since we will reference some of the proof ideas later in this paper.

\begingroup
\def\thethm{\ref{thm:nowakowski}}
\begin{thm}
A graph is cop-win if and only if it is constructible.
\end{thm}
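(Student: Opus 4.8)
The plan is to prove the two implications separately, both resting on a single observation already highlighted in the text: if the cop occupies a vertex $c$ that dominates the robber's vertex $r$, she wins on her next move. Since the graphs are reflexive I will write $N(\nu)$ for the neighbourhood including $\nu$ itself, so that ``$\mu$ dominates $\nu$'' means exactly $\mu \neq \nu$ and $N(\nu) \subseteq N(\mu)$. Throughout I treat the statement as concerning finite graphs (as in the version stated in the introduction), which licenses an induction on the number of vertices. A useful auxiliary fact I would isolate first is that \emph{a retract of a cop-win graph is cop-win}: given a retraction $\rho\colon G \to H$ and a winning cop strategy on $G$, the cop plays on $H$ by maintaining a ``virtual'' position in $G$ that follows her $G$-strategy against the robber's moves (which lie in $H \subseteq G$ and are thus legal $G$-moves), while her actual position is the image of the virtual one under $\rho$. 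Because $\rho$ is a homomorphism her real moves are legal, and when the virtual cop catches the robber at some $r \in H$ her real position is $\rho(r) = r$, so she catches the real robber.

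For the direction \emph{constructible} $\Rightarrow$ \emph{cop-win} I would induct on $|V|$, using the machinery already built. Let $v$ be the largest vertex in a dominating order and $u = \delta(v)$ the vertex dominating it, so that $G_{<v} = G - v$ is constructible (the order restricts) and cop-win by induction, say via a strategy $S'$. On $G$ the cop runs $S'$ against the \emph{shadow} robber located at $\rho_v(r)$, where $\rho_v$ is the retraction from Lemma~\ref{lem:retraction}; since $\rho_v$ is a homomorphism fixing $G - v$ and sending $v \mapsto u$, the shadow robber's moves are legal in $G - v$, so $S'$ eventually puts the cop on the shadow's vertex $w = \rho_v(r)$. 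If the real robber is not at $v$ then $\rho_v(r) = r = w$ and he is caught; if he is at $v$ then $w = u$ dominates $v$, so the cop catches him on her following move (every vertex the robber can reach from $v$ lies in $N(v) \subseteq N(u)$).

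For the converse \emph{cop-win} $\Rightarrow$ \emph{constructible} I would again induct, the crux being the existence of a dominated vertex (a ``corner'') in every finite cop-win graph with at least two vertices. I would prove the contrapositive: if \emph{no} vertex is dominated, the robber wins. The robber maintains the invariant that after each of his moves he sits at a vertex $r \notin N(c)$, where $c$ is the cop's current position; this prevents immediate capture, since the cop's next vertex lies in $N(c)$. The invariant is sustainable because whenever the cop is at $c$ and the robber at a distinct $r$, the absence of domination gives $N(r) \not\subseteq N(c)$, so the robber can step to some $r' \in N(r) \setminus N(c)$; the initial move is possible for the same reason applied to the cop's starting vertex. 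Having secured a corner $v$ dominated by $u$, I observe that $G - v$ is a retract of $G$ (map $v \mapsto u$, fix the rest; this is a homomorphism because $u$ dominates $v$), hence cop-win by the auxiliary fact and therefore constructible by induction; appending $v$ on top with $\delta(v) = u$ extends the dominating order to all of $G$. I expect the main obstacle to lie precisely in this converse direction, and specifically in the corner-existence argument: one must verify carefully that the robber's escape invariant is genuinely self-sustaining (that the escape vertex $r'$ is distinct from the cop's position, so that the no-domination hypothesis reapplies at the next step), since this is where the finiteness and the reflexive closed-neighbourhood conventions interact most delicately.
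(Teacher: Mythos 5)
Your proposal is correct, but your proof of the direction \emph{cop-win} $\Rightarrow$ \emph{constructible} takes a genuinely different route from the paper's. For \emph{constructible} $\Rightarrow$ \emph{cop-win} the two arguments are essentially identical: both induct on $|V|$, remove the last vertex $v$ of a dominating order, and let the cop play the inductive strategy against the robber's shadow --- the paper phrases this as ``pretending the robber is at $u$ whenever he moves to $v$'', you phrase it via the retraction $\rho_v$ of Lemma~\ref{lem:retraction}, and since $\rho_v$ fixes $G-v$ and sends $v \mapsto u$ these are the same map. For the converse, the paper analyses the endgame: assuming the robber plays \emph{optimally}, the position $(c,r)$ just before the robber's last move must have $c$ dominating $r$, and $G-r$ is then cop-win because the cop can reuse her strategy with $c$ in place of $r$. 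You instead establish the existence of a dominated vertex by contraposition: if no vertex of $G$ is dominated, the robber maintains the invariant $r \notin N(c)$ after each of his moves, which is self-sustaining precisely because the escape vertex $r' \in N(r)\setminus N(c')$ automatically satisfies $r' \neq c'$ (a point you rightly flag as the delicate one); you then transfer cop-winness to $G-v$ via your auxiliary retract lemma rather than by re-routing the strategy directly. Both arguments are sound. The trade-off: the paper's endgame argument is shorter but leans on the informal notions of ``optimal play'' and ``the position before the last move'', which strictly speaking call for a backward-induction formalisation of the game value; your contrapositive argument is fully explicit (it exhibits a concrete evading strategy, in the classical Nowakowski--Winkler style) and needs no appeal to optimality. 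Moreover your two auxiliary ingredients are exactly the ones that scale to the infinite setting: your retract lemma is the finite analogue of Lemma~\ref{lem:retractcopwin}, and your shadow strategy via $\rho_v$ is the device the paper itself upgrades in Theorem~\ref{thm:weaklycopwin}. The cost is length --- you need two lemmas and an invariant argument where the paper spends one paragraph.
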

\addtocounter{thm}{-1}
\endgroup

\begin{proof}
We use induction on the number of vertices. Clearly the statement is true for the graph on one vertex which is both constructible and cop-win. Now assume that the equivalence holds for every graph on at most $n-1$ vertices and let $G =(V,E)$ be a graph on $n$ vertices.

If $G$ is cop-win, then there must be vertices $c$ and $r$ occupied by the cop and the robber before the last move of the robber. We assume that the robber plays optimally, that is, he tries to avoid capture as long as possible. Then $r$ is dominated by $c$ because otherwise the robber could avoid being captured for at least one more round. 

It is easy to see that $G - r$ is cop-win. Simply note that the cop can play the exact same strategy as on $G$ with the only difference that he uses $c$ instead of $r$.

By induction hypothesis $G-r$ is constructible. By appending $r$ to any dominating order of $G-r$ we obtain a dominating order of $G$ because $r$ is dominated by $c$.

Conversely assume that $G$ is constructible. Let $v$ be the last vertex in a dominating order of $G$. Then $G-v$ is constructible and hence cop-win. Furthermore there is a vertex $u \in V$ which dominates $v$. 

Assume that the cop plays a winning strategy for $G-v$ on $G$, pretending that the robber is at $u$ whenever he moves to $v$. Then after finitely many steps the cop either catches the robber, or he catches the robbers ``shadow'', meaning that the cop is at $u$ while the robber is at $v$. In the latter case the cop will win the game in the next move, proving that $G$ is cop-win.
\end{proof}

Usually, in the above theorem the notion dismantability is used in place of constructibility. A graph is dismantable if we can recursively remove dominated vertices until we end up with a single vertex, the order in which the vertices are removed is called a dismantling order. Clearly, dismantability and constructibility are equivalent conditions for finite graphs as the reverse of every constructing order is a dismantling order and vice versa. 

This equivalence is not valid for infinite graphs. The reason for this is that both constructing and dismantling orders are required to be well orders, but reversing a well order does not give a well order again. In this paper we use constructibility rather than dismantability because it seems to be better adapted to the inductive approach that we take. Nevertheless, in Section \ref{sec:dismantable} we will consider dismantable graphs as well.

A different notion of strategy which will be used later in this paper are so called \emph{time dependent strategies}. As the name suggests, in this kind of strategy the cop's move does not only depend on the positions of the two players, but also on the number of the current round. In other words a time dependent strategy is a map $s\colon V \times V \times \mathbb N$ which assigns to the triple $(c, r,t)$ a neighbour of $c$ meaning that if at time $t$ the cop is at $c$ and the robber is at $r$, then the cop moves to $s(c,r,t)$ in the next step.

While at the first glance this looks like a stronger concept than the ordinary strategies defined earlier, it turns out that at least in the finite case it has no impact on whether or not the cop has a winning strategy.

\begin{prp}
Let $G$ be finite. The cop has a winning strategy if and only if she has a time dependent winning strategy.
\end{prp}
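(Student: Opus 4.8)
The plan is to prove the two implications separately; one is immediate and the other carries all the content. For the trivial direction, note that an ordinary strategy $S\colon V \times V \to V$ is already a time dependent strategy: setting $s(c,r,t) = S(c,r)$ for every $t$ produces a time dependent strategy under which each play coincides with a play following $S$. Hence an ordinary winning strategy is in particular a time dependent winning strategy, which settles the ``only if'' direction.

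For the converse I would adapt the relational approach of Nowakowski and Winkler, since it produces a time independent strategy directly. Define a nested family of relations $\geq_n$ on $V \times V$, where $c \geq_n r$ is meant to express that a cop at $c$ facing a robber at $r$ whose turn it is can force a capture within $n$ rounds. Formally, put $c \geq_0 r$ if and only if $c = r$, and $c \geq_{n+1} r$ if and only if for every neighbour $r'$ of $r$ there is a neighbour $c'$ of $c$ with $c' \geq_n r'$. In particular $c \geq_1 r$ holds exactly when $c$ is adjacent to $r$ and to all of its neighbours, i.e.\ when a cop at $c$ catches the robber on her next move. An easy induction shows that the relations are nested, $\geq_n \subseteq \geq_{n+1}$, so I would introduce the potential $f(c,r) = \min\{ n \in \mathbb N \mid c \geq_n r \}$, with the convention $f(c,r) = \infty$ when no such $n$ exists. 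The crucial feature of $f$ is that it depends only on the two positions and not on the current round.

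The key step is to verify that $f(c,r) < \infty$ holds precisely when the cop can force a capture from the configuration $(c,r)$ by any means at all. One inclusion is clear by following the choices witnessing $c \geq_n r$. For the other, this is where finiteness of $G$ is indispensable: since $\geq_0 \subseteq \geq_1 \subseteq \cdots$ is an increasing chain of subsets of the finite set $V \times V$, it stabilises after finitely many steps at some relation $\geq_N$, and then $\geq_N = \geq_{N+1}$. If a configuration $(c,r)$ had $f(c,r) = \infty$, then in particular $c \geq_{N+1} r$ fails, so there is a robber move $r'$ with $f(c',r') = \infty$ for every cop response $c'$. Thus the set of configurations of infinite potential is one from which the robber can escape forever, so the cop cannot force a capture from any of them. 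I expect this stabilisation argument to be the main obstacle, and it is exactly the point that breaks down for infinite graphs, where the chain of relations may stabilise only after transfinitely many steps, so that a time dependent strategy can genuinely outperform every time independent one.

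Granting this, the result follows by reading a time independent strategy off from $f$. A time dependent winning strategy in particular fixes a cop starting vertex $c_0$ from which the cop wins against every robber start $r_0$; since after the initial placements it is the cop's turn, this forces $\min_{c'} f(c',r_0) < \infty$ for every $r_0$, where the minimum ranges over the neighbours $c'$ of $c_0$. Define $S$ by letting $S(c,r)$ be a neighbour $c'$ of $c$ that minimises $f(c',r)$. If this minimum equals $m$, then after the cop has moved to $c'$ it is the robber's turn in a configuration with $f(c',r) = m$, and the defining property of $\geq_m$ guarantees that whatever neighbour $r'$ the robber moves to, some neighbour of $c'$ has $f$-value at most $m-1$ against $r'$. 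Hence the quantity $\min_{c'} f(c',r)$ strictly decreases with every round, so starting from $(c_0,r_0)$ the cop catches the robber after finitely many rounds. As $S$ does not refer to the round number, it is an ordinary winning strategy, completing the proof.
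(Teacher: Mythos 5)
Your proof is correct, but it takes a genuinely different route from the paper. The paper's own proof is essentially a two-line reduction: it observes that the argument proving the forward direction of Theorem~\ref{thm:nowakowski} (an optimally playing robber is dominated just before capture, so one can delete his final vertex and induct) applies verbatim when the cop's strategy is time dependent, hence a time dependent winning strategy forces $G$ to be constructible, and constructibility in turn yields an ordinary winning strategy by the second half of that same proof. You instead bypass constructibility entirely and work with the Nowakowski--Winkler relational characterisation: the nested relations $\geq_n$, their stabilisation on a finite configuration space, the potential $f(c,r)$, and the strictly decreasing quantity $\min_{c'} f(c',r)$ that reads off a time independent strategy. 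Your argument is longer but self-contained, and it has two virtues the paper's reduction lacks: it constructs the ordinary strategy explicitly rather than through the dismantling induction, and it isolates precisely where finiteness is indispensable (the chain $\geq_0 \subseteq \geq_1 \subseteq \cdots$ stabilising after finitely many steps), which is exactly the mechanism whose failure makes the infinite analogue---conjectured in the paper with the notion of weakly winning strategies---nontrivial. One cosmetic remark: since the paper's graphs are reflexive, ``neighbour'' includes the vertex itself, so your reading of $c \geq_1 r$ as domination silently covers the case $c = r$ as well; this is consistent with your recursion and causes no gap.
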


\begin{proof}
Since every strategy is also a time dependent strategy one of the two implications is trivial. Now assume that $c$ has a time dependent winning strategy. Then by the same arguments as in the proof of Theorem \ref{thm:nowakowski} we can conclude that $G$ is constructible and hence there must be a winning strategy for $c$.
\end{proof}

We conjecture an analogous statement to be true in the infinite case with the notion of weakly winning which is introduced in Section \ref{sec:weaklynew} of this paper.

\section{Weakly cop-win graphs}
\label{sec:weakly}

As mentioned earlier, Theorem~\ref{thm:nowakowski} does not remain valid if we consider infinite graphs.  The issue was first addressed by Chastand et al.~\cite{zbMATH01533339}. They suggested the following change to the rules: the cop wins, if in order to avoid being caught, after some finite number of steps the robber has to move to a vertex that he has not visited so far on each move. The intuition behind this is, that the robber can only avoid being caught by running away in a straight line from some point on. 

Note that there is no a-priori bound on the number of steps after which the robber has to run away in a straight line. In fact it is easy to show that there cannot be such an a priori bound if the graph has infinite diameter. In this case, for any $n \in \mathbb N$ the robber could begin $n$ steps away from the cop and only start running away once the cop approaches him. This will not happen before she has taken at least $n-1$ turns.

We call graphs for which the cop has a winning strategy with respect to this modified winning criterion \emph{C-weakly cop-win}. Clearly countable trees and locally finite chordal graphs are C-weakly cop-win. Furthermore one can show the following.

\begin{thm}[Chastand et al.~\cite{zbMATH01533339}]
\label{thm:chastand}
Let $G$ be a constructible graph.  Assume that $G$ admits a dominating order $<$ such that there is a domination map associated to $<$ which is a self contraction of $G$. Then $G$ is C-weakly cop-win.
\end{thm}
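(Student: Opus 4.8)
The plan is to have the cop chase the robber's \emph{shadow} under the domination map $\delta$. Writing $r$ for the robber's current vertex, the shadow is $\delta(r)$, which by definition dominates $r$ in $G_{\leq r}$ and, since $\delta$ is assumed to be a self-contraction, tracks the robber faithfully: if the robber steps from $r$ to an adjacent $r'$, then $\delta(r)$ and $\delta(r')$ are equal or adjacent, so a cop sitting on the shadow can follow it in a single move. The heart of the argument is the \emph{locked-on} configuration, in which, just before the robber's turn, the cop occupies $\delta(r)$ while the robber is at $r$.

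First I would analyse this locked-on configuration. Because $\delta(r)$ dominates $r$ in $G_{\leq r}$, it is adjacent to $r$ and to every neighbour of $r$ that does not exceed $r$ in the well order. Hence if the robber stays put or moves to any vertex $\leq r$, the cop captures him on her next move; to survive he must flee to some neighbour $r' > r$. The cop then moves from $\delta(r)$ to $\delta(r')$---a legal step by the self-contraction property---and is locked on again, now with the larger index $r'$. Thus the locked-on configuration is absorbing, and as long as the robber avoids capture his indices form a strictly increasing sequence $r < r' < r'' < \cdots$ in the well order; in particular every vertex he moves to is one he has never visited before. This is exactly the winning condition defining C-weakly cop-win, so once the cop is locked on she has won.

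The remaining---and I expect principal---difficulty is \emph{initialization}: the cop must move first and so cannot begin locked on. I would let her start at the minimal vertex $0$ and chase the shadow along geodesics, moving one step toward the current $\delta(r)$ on each turn. Since $\delta$ is a contraction, the shadow moves by at most one edge per round, so the potential $\dist(c,\delta(r))$ is non-increasing; being a non-negative integer it either reaches $0$, putting the cop into the locked-on configuration and finishing the game by the previous paragraph, or it stabilizes at some value $d \geq 1$. The crux is to show that this stabilized regime cannot help the robber: when the distance is pinned at $d \geq 1$ the shadow must advance by exactly one edge \emph{away} from the cop every round, and I would use the non-expansiveness of $\delta$ (pulling this motion back to the robber) to argue that the robber can never return to a previously occupied vertex without letting the cop gain ground and strictly decrease the potential. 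Making this step rigorous---ruling out bounded evasion and confirming that perpetual failure to lock on already forces the robber onto new vertices each turn---is the part I expect to be the main obstacle, and I would attack it with a descent argument built on Lemma~\ref{lem:finitedominationchain} together with the contraction property, possibly by letting the cop chase the iterated shadows $\delta^k(r)$ or the retraction $\rho_\nu(r)$ rather than $\delta(r)$ alone.
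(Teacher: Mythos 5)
Your endgame analysis is correct and is indeed the heart of the matter: once the cop occupies $\delta(r)$ with the robber to move, domination in $G_{\leq r}$ forces the robber to a neighbour $r'>r$, the self-contraction property lets the cop step to $\delta(r')$, and the resulting strictly increasing sequence of robber positions gives the C-weak winning condition (post-lock-on positions are pairwise distinct, so only finitely many of them can coincide with the finitely many vertices visited earlier, and the ``after some finite number of steps'' clause absorbs this). The genuine gap is exactly where you located it: the approach phase. The potential $\dist(c,\delta(r))$ cannot close the argument. When it is pinned at some $d\geq 1$, all you know is that the shadow recovers distance $d$ from the \emph{moving} cop each round; this does not prevent the shadow from cycling through the same vertices forever (cop and shadow can chase each other around a cycle of configurations), and it says even less about the robber himself, since $\delta$ is neither injective nor distance-preserving, so ``the shadow moves away'' cannot be pulled back to ``the robber moves to fresh vertices.'' No descent is available from graph distance.

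The missing idea is to replace graph distance by the domination index. Since $\delta$ is a self-contraction, so is every iterate $\delta^k$; hence the cop can maintain the invariant that after her move she stands on $\delta^k(r)$ for a finite $k$ (she starts at $0=\delta^{k_0}(r_0)$ for some finite $k_0$ by Lemma~\ref{lem:finitedominationchain}), and after each robber move $r\to r'$ she moves to $\delta^{k'}(r')$ with $k'$ minimal such that this is a neighbour of her current vertex; reflexivity gives $k'\leq k$, so $k$ is non-increasing. Your level-one lock-on computation then generalises to every level: writing $\eta=\delta^{k-1}(r)$ and $\eta'=\delta^{k-1}(r')$, either $\eta'\leq\eta$, in which case $c=\delta(\eta)$ dominates $\eta$ in $G_{\leq\eta}$, so $c\eta'$ is an edge and $k$ strictly drops, or $\eta'>\eta$. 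Consequently $k$ either reaches $0$ (capture) or stabilises at some $k_\infty\geq 1$, after which $\delta^{k_\infty-1}(r_t)$ increases strictly with every robber move; the robber can then never stand still or revisit a vertex, which is the C-weak win. You gestured at exactly this (``chase the iterated shadows''), but it is the entire proof, not a patch to the distance argument. Note also that the paper itself gives no proof of Theorem~\ref{thm:chastand} (it is quoted from Chastand et al.); the closest thing in the paper is the strategy $s^*$ in the proof of Theorem~\ref{thm:weaklycopwin}, and the index bookkeeping above is precisely what the self-contraction hypothesis adds to that argument: it keeps $k$ fixed when the robber moves, upgrading ``every vertex is visited only finitely often'' to the C-weak condition.
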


A self contraction here is a function from $G$ to itself which maps adjacent vertices either to adjacent vertices or to the same vertex. From the above theorem it follows that every Helly graph and every connected bridged graph is C-weakly cop-win. In the light of this result it is natural to ask if every constructible graph admits a dominating order as in the condition of Theorem~\ref{thm:chastand} and whether or not the constructible graphs are exactly the C-weakly cop-win graphs (Questions 3 -- 5 in \cite{zbMATH01533339}). This is however not the case as the following example shows.

Let $G$ be a graph constructed as follows. Start with the $5$-regular tree $T_5$. Then replace every vertex $v$ of this tree by a copy the graph $H$ shown in Figure~\ref{fig:doublewheel} attaching the edges incident to $v$ to the copies of the vertices $a_0, \ldots a_4$. We would like to remark that the graph in Figure~\ref{fig:doublewheel} has been discovered independently by Boyer~et~al.~\cite{zbMATH06180528} and used to show that the distance between the cop and the robber in an optimal (with respect to the capture time) winning strategy of the cop is not necessarily monotonically decreasing. In fact, we also exploit that in order to catch the robber on this graph, the cop has to increase the distance to the robber giving him the opportunity to stay at the same vertex for another round.

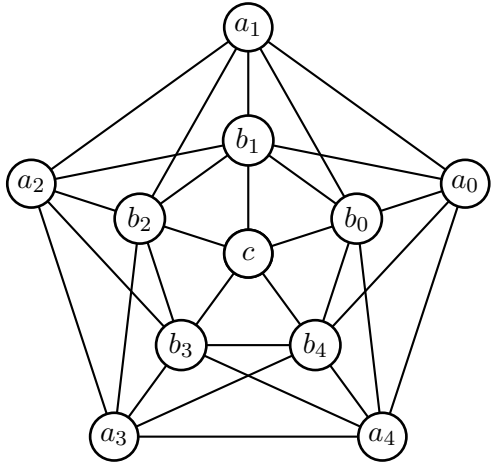
\begin{figure}
\begin{center}
\begin{tikzpicture}
	\SetVertexMath
	\SetVertexNormal[LineWidth=1pt]
	\grCycle[prefix=a,rotation=18,RA=3]{5}
	\grCycle[prefix=b,rotation=18,RA=1.5]{5}
	\EdgeMod*{a}{b}{5}{-1}{1}
	\EdgeMod*{a}{b}{5}{0}{1}
	\EdgeMod*{a}{b}{5}{1}{1}
	\Vertex{c0}
	\Vertex{c}
	\EdgeFromOneToAll{c}{b}{0}{5}
\end{tikzpicture}
\end{center}
\caption{The building block $H$ of the counterexample.}
\label{fig:doublewheel}
\end{figure}

\begin{thm}
The graph $G$ defined above is constructible, but not C-weakly cop-win.
\end{thm}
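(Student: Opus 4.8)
The plan is to handle the two assertions separately: constructibility by writing down an explicit natural dominating order, and failure of the C-weakly property by producing a robber who, against every cop, stays uncaught yet is forced to move to an already-visited vertex infinitely often.

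For constructibility I would first note that $G$ is countable and locally finite, so by Lemma~\ref{lem:naturalorder} it is enough to exhibit a natural dominating order (a dominating well order of type $\le \mathbb N$ is already a witness). Root the underlying tree $T_5$ at a vertex $r$ and enumerate its vertices in breadth-first order $t_0 = r, t_1, t_2, \dots$; I build the copies $H_{t_0}, H_{t_1}, \dots$ in this order, adding the eleven vertices of each copy in a fixed finite order. Writing the outer, inner and hub vertices of a copy as $a_i, b_i, c$ (indices mod $5$), for the root copy I add $c$, then the $b_i$ (each dominated by $c$), then the $a_i$ (each dominated by $b_i$); for every later copy $H_{t_k}$ I start with the attachment vertex $a_0$ joined by a tree edge to the already-built parent (so $a_0$ is dominated by its unique present neighbour), then $b_0, c, b_1, b_4, b_2, b_3$, and finally $a_1, a_2, a_3, a_4$. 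A short check shows each vertex is dominated in the subgraph built so far (for instance $b_0$ by $a_0$, the remaining $b_i$ by $c$ or $b_0$, and each $a_i$ by $b_i$); as this interleaving has order type $\mathbb N$, $G$ is constructible.

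For the second assertion I would fix an arbitrary cop and establish two facts: (S) the robber is never caught, and (R) the robber moves to an already-visited vertex infinitely often. Together these say the cop never forces the robber into permanent flight, so $G$ is not C-weakly cop-win. The robber confines himself to attachment vertices and maintains the invariant that immediately after each of his moves $\dist(\text{cop},\text{robber}) \ge 2$. His rule is: if after the cop's move $\dist \ge 2$ he stays put; if $\dist = 1$ he flees to an attachment vertex at distance $\ge 2$, preferring a cycle-neighbour $a_{i\pm 1}^{(v)}$ and using the external tree edge only when no safe cycle-neighbour exists. A finite case distinction on the cop's position relative to the robber's vertex $a_i^{(v)}$ gives (S): the only vertex other than $a_i^{(v)}$ adjacent to all of $a_{i-1}^{(v)}, a_i^{(v)}, a_{i+1}^{(v)}$ is the inner vertex $b_i^{(v)}$, and $b_i^{(v)}$ is not adjacent to the external neighbour, so whenever both cycle-neighbours are blocked the tree edge is open.

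The crux is (R), and it is exactly where the special geometry of $H$ enters. I would prove the key local fact that the robber is forced onto a tree edge only from the configuration ``cop at $b_i^{(v)}$, robber at $a_i^{(v)}$'', and that the cop can reach $b_i^{(v)}$ while the robber sits safely at $a_i^{(v)}$ only by moving there from the hub $c^{(v)}$: among the neighbours of $b_i^{(v)}$, the unique one at distance $\ge 2$ from $a_i^{(v)}$ is $c^{(v)}$, while the robber's invariant forbids the cop from having occupied any of the other, distance-$1$, neighbours on the preceding turn. Hence every forced tree-crossing is immediately preceded, on the robber's previous turn, by a position with the cop at $c^{(v)}$ at distance $2$, where the robber's rule makes him stay put --- a move to an already-visited vertex. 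Now suppose for contradiction that from some round on every robber move is to a new vertex: then he cannot remain within finitely many copies (he would exhaust their vertices), so he visits infinitely many copies and therefore crosses tree edges infinitely often; but each such crossing is forced and thus preceded by a stay-put move, contradicting the assumption. This yields (R). I expect the main obstacle to be precisely this local analysis of $H$ --- the adjacency bookkeeping pinning the only escape-blocking vertex to $b_i^{(v)}$ and the only approach to it to the hub --- after which the tree structure of $T_5$ propagates the conclusion globally.
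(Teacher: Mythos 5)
Your proposal is correct and follows essentially the same route as the paper: a breadth-first interleaving of per-copy dominating orders for constructibility, and for the negative part the same key geometric fact that the cop can only force the robber across a tree edge by approaching the dominating vertex $b_i$ from the hub $c$, which sits at distance $2$ from the robber and therefore grants him a stay-put (already-visited) move before each forced crossing. Your explicit robber strategy with the distance-$\ge 2$ invariant is a more rigorous rendering of what the paper argues informally (splitting on whether the cop uses hub vertices infinitely often), but the underlying idea is identical.
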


\begin{proof}
To see that $G$ is constructible pick an arbitrary root $r$ of $T_5$. Clearly, the order in which vertices of $T_5$ are visited by a breadth first search starting at $r$ is a dominating order.

Assume without loss of generality that all copies of $H$ are inserted in a way that $a_0$ is the vertex closest to the copy of $H$ that was used to replace the root. Observe that $a_0,b_4,c,b_0,\ldots,b_3,a_1,\ldots, a_4$ is a dominating order of $H$. 

Put the copies of $H$ in the same order as the corresponding vertices appear in the breadth first search and order the vertices of each copy of $H$ as above. Every copy $a$ of $a_0$ is dominated in $G_{\leq a}$ by its only neighbour outside of the corresponding copy of $H$. On the other hand, if $v$ is a copy of some other vertex of $H$, then the only neighbours of $v$ in $G_{\leq v}$ lie in the corresponding copy of $H$. Since the order of the vertices of each copy of $H$ is dominating, there must be a vertex which dominates $v$ in $G_{\leq v}$.

Hence it only remains to show that $G$ is not C-weakly cop-win. To see this, first notice that the robber can run away forever without ever visiting a copy of $b_i$ or $c$. If the cop only uses vertices of type $a_i$ and $b_i$ from some finite time on, then the robber can avoid being caught forever while staying in one copy of $H$. The only thing he has to do is move around the outer cycle to stay as far away from the cop as possible. 

Thus the cop cannot win, unless she visits infinitely many copies of the center $c$. But whenever she does this, she is at least two steps away from the robber. So he can stay at the same vertex for one step or move back to where he came from.
\end{proof}

\section{A different approach}
\label{sec:weaklynew}

In the last section we have seen that it is not possible to generalise Theorem~\ref{thm:nowakowski} to infinite graphs using the notions of Chastand~et~al. In this section we suggest another definition of weakly cop-win graphs for which it is possible to generalise at least one of the two implications of the theorem. The intuition behind this definition is quite similar to the intuition behind the definition in the last section: The cop wins by either catching the robber, or by chasing him away. However, our notion of chasing away seems to be better adapted to the game and the structure of infinite graphs.

We call a strategy of the cop \emph{weakly winning} if it prevents the robber from visiting any vertex infinitely often. In particular, if the cop wins then the robber must eventually leave every finite set of vertices. Note that if the graph $G$ is locally finite, then the robber's position must be converging to an end of $G$. Since the cop's position must converge to the same end, we can think of this as ``catching the robber at an end''. Thus the cop wins the game, if she catches the robber at either a vertex or an end of $G$. A graph $G$ is called \emph{weakly cop-win} if it admits a weakly winning strategy.

The notion of weakly cop-win graphs can be used to generalise results about finite cop-win graphs to an infinite setting--- sometimes even with the same proof that has been used for years in the finite case. The following lemma is a nice example of such a result.

\begin{lem}
\label{lem:retractcopwin}
Retracts of weakly cop-win graphs are weakly cop-win.
\end{lem}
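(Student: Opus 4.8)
The plan is to run the classical retract-simulation argument in the forward direction: the cop playing on the retract $H$ secretly plays a ``shadow'' game on the ambient graph $G$ using a weakly winning strategy there, and then projects each shadow move back into $H$ through the retraction. Fix a retraction $\phi\colon G \to H$ (so $\phi$ is a homomorphism with $\phi|_H = \mathrm{id}$) and a weakly winning strategy $S_G$ on $G$, which exists because $G$ is weakly cop-win.

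First I would describe the induced strategy on $H$. The cop maintains an imagined shadow position $\tilde c$ in $G$, while her actual position in $H$ is always $\phi(\tilde c)$. Since $H$ is a subgraph of $G$, every robber position and every robber move on $H$ is also legal in $G$; so the cop pictures a parallel game on $G$ in which the imagined robber occupies exactly the same vertices that the real robber occupies. In this imagined game the cop follows $S_G$: if her shadow is at $\tilde c$ and the robber is at $r$, she updates the shadow to $\tilde c' = S_G(\tilde c, r)$ and, in the real game, moves from $\phi(\tilde c)$ to $\phi(\tilde c')$. (For the opening move she takes as her shadow the starting vertex that $S_G$ prescribes and starts the real game at its image under $\phi$.) Legality is immediate: $\tilde c'$ is a neighbour of $\tilde c$ in $G$ or equal to it, so because $\phi$ is a homomorphism $\phi(\tilde c')$ is a neighbour of $\phi(\tilde c)$ in $H$ or equal to it, and every prescribed move runs along an edge of $H$.

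The heart of the argument is that the imagined play is a genuine run of the $G$-game in which the cop uses $S_G$ and the robber makes the legal moves $r_1, r_3, r_5, \dots$. By the weakly winning property of $S_G$, in this run the robber visits no vertex of $G$ infinitely often. But the robber's trajectory in the imagined game is, by construction, literally the same sequence of vertices as his trajectory in the real game on $H$, and every vertex of $H$ is a vertex of $G$. Hence the real robber visits no vertex of $H$ infinitely often, which is exactly what it means for the induced strategy to be weakly winning on $H$. Thus $H$ is weakly cop-win.

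The one point I expect to need care — and the only place the argument might seem to fail — is that in the real game the robber reacts to the cop's actual position $\phi(\tilde c)$, whereas the shadow cop sits at $\tilde c$, and these may differ. I would emphasise that this is harmless: the weakly winning guarantee for $S_G$ holds against \emph{every} robber play whatsoever, so it is irrelevant why the imagined robber chooses the moves he does. All that is required is that his moves be legal in $G$ and that the cop answer according to $S_G$, both of which hold. Consequently the mismatch between $\tilde c$ and $\phi(\tilde c)$ causes no difficulty, and the simulation faithfully transfers the weakly winning property from $G$ down to the retract $H$.
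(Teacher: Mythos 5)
Your proof is correct and is essentially the paper's own argument: both run a parallel game on $G$ in which a (shadow) cop plays the weakly winning strategy against the robber's actual trajectory --- legal because $H$ is a subgraph of $G$ --- and place the real cop at the retraction image of the shadow, so that the finitely-many-visits guarantee transfers verbatim. The one point you leave implicit, that a capture in the imagined game ends the real game at the same moment, is immediate since $\tilde c = r \in H$ forces $\phi(\tilde c) = r$.
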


\begin{proof}
Let $G$ be a weakly cop-win graph and let $\rho \colon G \to H$ be a retraction of $G$ to $H$. Now imagine two games played in parallel on $G$ and $H$ where the robber is not allowed to leave the subgraph $H$. 

The cop $C_G$ on $G$ plays the winning strategy while the cop $C_H$ on $H$ just shadows the moves of $C_G$ with respect to the retraction $\rho$. That is, when $C_G$ is at $\nu$, then $C_H$ will be at $\rho(\nu)$.

It is clear that if the robber is caught by either of the cops, then the other cop will catch him on the same move. Hence if the strategy of $C_H$ is not winning, then the robber is able to visit the same vertex infinitely often without being caught by either of the cops. This contradicts the assumption that the strategy on $G$ was winning.
\end{proof}

In the following theorem we restrict ourselves to locally finite graphs. This is done mainly to demonstrate how similar to the finite case the proof is. A more general version be proved in the next section. However, the proof there is quite different.

\begin{thm}
\label{thm:locfincopwin}
If a locally finite graph $G$ is constructible, then $G$ is weakly cop-win.
\end{thm}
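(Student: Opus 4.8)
The plan is to mimic the shadow argument from the proof of Theorem~\ref{thm:nowakowski}, but to apply it simultaneously along the whole (now infinite) dominating order. First I would invoke Lemma~\ref{lem:naturalorder} to replace the given dominating order by a natural one, so that $V=\{0,1,2,\dots\}$, and fix an associated domination map $\delta$ together with the retractions $\rho_n\colon G\to G_{<n}$ from Lemma~\ref{lem:retraction}. Each $G_{<n}$ is a finite constructible graph and hence cop-win by Theorem~\ref{thm:nowakowski}. Since the order is natural, every finite set of vertices is contained in some $G_{<n}$, so the goal ``the robber visits no vertex infinitely often'' is equivalent to the statement that for every $n$ there is a time after which the robber's position $r$ always satisfies $r\ge n$. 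Thus it suffices to design a single strategy that forces the robber's index to tend to infinity. (Note that constructibility forces $G$ to be connected, since every $\nu>0$ is adjacent to $\delta(\nu)<\nu$, so there is no difficulty with the cop reaching a prescribed vertex.)

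The central building block is \emph{tracking a shadow}. Write $r$ for the robber's true position and $\rho_n(r)\in G_{<n}$ for its shadow. If at some moment the cop occupies $\rho_n(r)$, she can remain on the shadow forever: since $\rho_n$ is a homomorphism the shadow moves to an adjacent or equal vertex on each robber turn, and the cop simply copies that move. While she tracks the shadow, every time the robber steps to a vertex $r<n$ we have $\rho_n(r)=r$, so the cop occupies the robber's vertex and wins; hence tracking the $\rho_n$-shadow confines the robber to $\{v\mid v\ge n\}$. Moreover the cop can reach the shadow in the first place, because catching the shadow-robber $\rho_n(r)$ in the finite cop-win graph $G_{<n}$ takes only finitely many moves from any starting position. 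The next step is to \emph{upgrade} the tracking level from $n$ to $n+1$ in finite time. Here $G_{<n+1}=G_{<n}\cup\{n\}$ with $n$ dominated by $\delta(n)\in G_{<n}$, which is exactly the configuration of the inductive step in the proof of Theorem~\ref{thm:nowakowski}: collapsing $n$ onto $\delta(n)$ is precisely $\rho_n$ restricted to $G_{<n+1}$. Tracking the $\rho_{n+1}$-shadow differs from tracking the $\rho_n$-shadow only when $\rho_{n+1}(r)=n$, in which case $\rho_n(r)=\delta(n)$ and, because $\rho_{n+1}(r)=n\ne r$, the robber sits at some $r\ge n+1$. In that situation the cop performs the single ``critical move'' from $\delta(n)$ to the adjacent vertex $n$; afterwards she tracks the finer shadow $\rho_{n+1}(r)$, confining the robber to $\{v\mid v\ge n+1\}$. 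Iterating, the cop tracks the levels $1,2,3,\dots$ in turn, each upgrade costing finitely many moves.

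The main obstacle, and the genuinely infinite feature of the argument, is that a critical move can momentarily loosen the cop's grip: on the robber turn immediately after the cop moves from $\delta(n)$ to $n$, the robber may step from his vertex $r\ge n+1$ to a neighbour $w$ with $w<n$, so that for one turn he occupies a low vertex $w$ uncaught before the cop re-establishes tracking. The key observation that rescues the argument is that such a gap vertex $w$ is necessarily a neighbour of $n$: since $\rho_{n+1}$ is a homomorphism, $\rho_{n+1}(w)=w$ is adjacent to $\rho_{n+1}(r)=n$, so the cop (now sitting at $n$) is adjacent to $w$ and recaptures the shadow on the very next turn. In a locally finite graph any fixed vertex $w$ is a neighbour of only finitely many vertices $n$, so each vertex can be a gap vertex during only finitely many upgrades. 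Combining this with the ever-increasing confinement, every vertex is visited only finitely often, so the strategy is weakly winning. I expect the delicate part of the write-up to be the bookkeeping of this single-turn gap---in particular verifying the parity/timing of the critical move and confirming that the cop is back on the shadow on the immediately following turn.
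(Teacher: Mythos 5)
Your argument is correct, but it is genuinely different from the proof the paper gives for this theorem. The paper's proof does not use the retractions at all here: it constructs, for each finite graph $G_{<\nu}$, the cop strategy $s_\nu$ from the proof of Theorem~\ref{thm:nowakowski}, notes that these strategies are nested ($s_\mu$ and $s_\nu$ agree on $G_{<\mu}$ for $\mu<\nu$), passes to the limit strategy $s(c,r)=s_\nu(c,r)$ with $\nu>\max\{c,r\}$, and then argues by contradiction: if the robber visited a vertex $r$ infinitely often, then (since under $s$ the cop's position is always $\leq$ the robber's and $G_{<r}$ is finite) some configuration $(c,r)$ would repeat, the play could be made periodic and hence confined to a finite $G_{<\nu}$, contradicting the fact that $s_\nu$ is winning there. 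Your shadow-tracking strategy, built from the domination map $\delta$ and the retractions $\rho_n$ of Lemma~\ref{lem:retraction}, is instead a ``lazy'', one-level-at-a-time variant of the strategy $s^*$ that the paper introduces in Section~\ref{sec:construct} to prove the stronger Theorem~\ref{thm:weaklycopwin} (constructibility implies weakly cop-win with no local finiteness assumption); there the cop jumps greedily to the deepest vertex $\delta^k(r)$ adjacent to her, which is what lets the argument survive arbitrary transfinite dominating orders, whereas your level-by-level upgrading genuinely needs the natural order supplied by Lemma~\ref{lem:naturalorder}. What your route buys is an explicit strategy with an explicit confinement statement (after finitely many rounds the robber can never re-enter $G_{<n}$ without immediate capture) rather than a proof by contradiction; what the paper's route buys is that nothing beyond the finite theorem and a limit of nested strategies is needed. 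Two small points about your write-up. First, the closing ``gap vertex'' bookkeeping is superfluous: as you yourself show, a gap vertex $w<n$ is adjacent to the cop's position $n$ and satisfies $\rho_{n+1}(w)=w$, so the cop's very next move lands on the robber and the play ends; hence a vertex can be a gap vertex at most once in an entire play, and local finiteness plays no role in this step (it enters only through Lemma~\ref{lem:naturalorder} and the finiteness of each $G_{<n}$). Second, the paper defines a strategy as a positional map $S\colon V\times V\to V$, while your strategy is phrased with a history-dependent ``current level''; this is harmless because on every reachable configuration your cop sits at $\delta^k(r)$ for a unique $k\geq 1$ and moves to $\delta^{k-1}(r)$, which is a positional rule, but a careful write-up should say so.
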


\begin{proof}
First note that by Lemma \ref{lem:naturalorder} we can assume that the dominating order on the vertex set is a natural order. In particular $G_{<\nu}$ is finite for every $\nu \in V$. Construct the strategy $s_\nu$ of the cop for each $G_{<\nu}$ exactly like in the proof of Theorem \ref{thm:nowakowski}. 

Note that for $\mu < \nu$ the strategies $s_\mu$ and $s_\nu$ coincide on $G_{<\mu}$. This observation is crucial because it allows us to define a limit strategy $s$ on $G$ by $s(c,r) = s_\nu(c,r)$ where $\nu > \max \{c,r\}$. Clearly, with this strategy the cops position after her move is always smaller or equal to the robbers position because the same holds for each strategy $s_\nu$.

Now assume that the robber can visit some vertex $r$ infinitely many times. Since $G_{<r}$ is finite this implies that there is some vertex $c \leq r$ such that it happens more than once, that the cop is at $c$ while the robber is at $r$.

But then the robber could repeat the steps between two occurrences of this situation arbitrarily. Since this leads to the same situations over and over again, neither the cop nor the robber will visit but finitely many vertices. Hence the robber would have a winning strategy in $G_{<\nu}$ where $\nu \in V$ is a common upper bound for all the vertices visited by the cop and the robber. 

But this contradicts the fact that by Theorem~\ref{thm:nowakowski} (or rather by its proof) the cop's strategy on $G_{<\nu}$ is winning.
\end{proof}

\section{Constructible graphs, retractions, and another strategy}
\label{sec:construct}

Now let us consider a different strategy $s^*$ which is defined using the retraction maps $(\rho_\nu)_{\nu \in \Ord}$ from Lemma~\ref{lem:retraction}. Since $\rho_\nu$ is a retraction it is clear that if $uv \in E$ then $\rho_\nu(u)\rho_\nu(v) \in E$. For a fixed vertex $v$ it follows from Lemma \ref{lem:finitedominationchain} that $\rho_\nu(v)$ only takes on finitely many different values $v,\delta(v),\delta^2(v),\ldots,\delta^k(v)$.

Now the strategy for the cop is to start at $0$ (which coincides with $\rho_1(v)$ for every $v$). For each consecutive step define
\[
	s^*(c,r) = \delta^{k(c,r)}(r)
\]
where
\[
	k(c,r) = \min \{k \in \mathbb N \mid c \delta^k(r) \in E\}.
\]
In other words the cop goes through the sequence $r,\delta(r),\delta^2(r),\ldots$ and moves to the first neighbour of $c$ that she encounters.

To see that this is indeed a viable strategy recall that the cop starts at $0 = \rho_1(r)$. Now assume that the robber moves from $r^{\text{old}}$ to $r^{\text{new}}$. If the cop is at $\rho_\nu(r^{\text{old}})$ for some $\nu \in \Ord$ then she can always move to $\rho_\nu(r^{\text{new}})$ because $\rho_\nu$ is a retraction. This implies that the above definition always yields a legal move.

\begin{thm}
\label{thm:weaklycopwin}
Every constructible graph is weakly cop-win.
\end{thm}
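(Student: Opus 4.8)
The plan is to analyze the strategy $s^*$ that the author has just defined and show it is weakly winning, that is, that under this strategy the robber cannot visit any vertex infinitely often. The strategy is already verified to produce legal moves, so the whole content of the theorem is the \emph{winning} claim. My central intuition is that $s^*$ forces the cop to ``track the robber at a small level.'' Concretely, I expect that after the cop's move her position $c$ satisfies $c \le r$, where $r$ is the robber's current vertex, and moreover $c = \rho_{r+1}(r)$ or something of that flavour: the cop sits on a vertex obtained from $r$ by iterating $\delta$, hence $c = \delta^j(r)$ for some $j$, so in particular $c \le r$. The first step is therefore to prove by induction on the rounds that this invariant is maintained: if the cop is at $\rho_\nu(r^{\text{old}})$ and the robber moves to $r^{\text{new}}$, the definition of $k(c,r)$ picks the \emph{least} $k$ with $c\,\delta^k(r^{\text{new}})\in E$, and I would show this produces a vertex of the form $\delta^k(r^{\text{new}}) \le r^{\text{new}}$ that is adjacent to $c$, so the ordering relation between cop and robber is controlled.

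The second, and I expect main, step is a descent argument to rule out a vertex being visited infinitely often. Suppose for contradiction the robber visits some vertex $r$ infinitely often. Consider the infimum $\nu_0$ over all vertices visited infinitely often, or better, track the quantity $\max\{c, r\}$ over the course of play. Because the graph need not be locally finite, I cannot simply invoke finiteness of $G_{<r}$ as in the proof of Theorem~\ref{thm:locfincopwin}; instead I would argue purely order-theoretically. The key is that the strategy $s^*$ is ``honest'' in the sense that it is really the limit of the finite strategies on each retract $G_{<\nu}$: restricted to any interval of play during which the robber stays below some level $\nu$, the cop plays exactly the winning strategy on $G_{<\nu}$ furnished by the inductive proof of Theorem~\ref{thm:nowakowski}. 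I would make this precise by showing that $s^*$ commutes with the retraction $\rho_\nu$, i.e. the cop's behaviour against a robber confined to $G_{<\nu}$ agrees with her behaviour in the finite game on $G_{<\nu}$.

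Granting that, the finishing move mirrors the locally finite case. If the robber visits $r$ infinitely often while the cop plays $s^*$, then since the cop's response to a given robber position in $G_{<\nu}$ is eventually periodic in the relevant finite subgame (the values $\rho_\nu(\text{robber trajectory})$ live in a finite set), some configuration $(c,r)$ with $c\le r$ recurs. The robber could then loop the segment of moves between two recurrences forever, keeping all positions below a fixed $\nu$; but on $G_{<\nu}$ the cop's induced strategy is the genuine winning strategy guaranteed by Theorem~\ref{thm:nowakowski}, so such an infinite robber evasion is impossible. This contradiction shows no vertex is visited infinitely often, i.e. $s^*$ is weakly winning.

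The hard part will be handling the non-locally-finite case honestly: I cannot lean on $G_{<\nu}$ being finite, so I must argue that the \emph{image} of the robber's trajectory under $\rho_\nu$ is finite (which follows from Lemma~\ref{lem:finitedominationchain}, since each $\rho_\nu(r)$ takes only finitely many values along the chain $r,\delta(r),\dots$), and that this is enough to force a recurrence and hence to transfer the contradiction to a finite subgame. Pinning down precisely in which finite graph the ``looping'' contradiction lives, and verifying that $s^*$ genuinely restricts to a winning strategy there, is where I expect the real work to be.
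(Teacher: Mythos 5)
Your opening invariant (after her move the cop sits at $\delta^j(r)$, i.e.\ at $\rho_\nu(r)$ for a non-decreasing level $\nu$) is correct and matches the paper. The gap is in your finishing move. Your plan is: pigeonhole a recurring configuration $(c,r)$, let the robber loop, and derive a contradiction because the periodic play is confined to $G_{<\nu}$, where the cop's strategy ``is the genuine winning strategy guaranteed by Theorem~\ref{thm:nowakowski}''. But Theorem~\ref{thm:nowakowski} is a statement about \emph{finite} graphs, and for a general constructible graph the initial segment $G_{<\nu}$ containing the finitely many vertices of the periodic play can be infinite: the dominating order is an arbitrary well order, and $G_{<\nu}$ is guaranteed to be finite only in the locally finite case after invoking Lemma~\ref{lem:naturalorder}. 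This is exactly why the paper's Theorem~\ref{thm:locfincopwin} is stated for locally finite graphs; transplanting that argument unchanged to the general case is circular, since ``the cop's strategy is winning on the constructible graph $G_{<\nu}$'' is an instance of the very theorem being proved. Your proposed repair --- that ``the image of the robber's trajectory under $\rho_\nu$ is finite'' --- misapplies Lemma~\ref{lem:finitedominationchain}: that lemma gives, for a \emph{fixed} vertex $\mu$, that the values $\rho_\nu(\mu)$ (as $\nu$ varies) lie in the finite set $\{\mu,\delta(\mu),\ldots,0\}$; it does not say that a fixed $\rho_\nu$ maps an infinite trajectory to a finite set, which is false when $G_{<\nu}$ is infinite. (The pigeonhole itself is salvageable: whenever the robber sits at the recurring vertex $r$, the cop, having just moved, occupies one of the finitely many vertices $\delta^j(r)$. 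What cannot be salvaged along your route is the contradiction after the pigeonhole.)

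The missing idea is the descent argument the paper uses precisely to avoid any reduction to finite subgames: if the robber moves from $r^{\text{old}}$ to $r^{\text{new}}$ and the cop answers by moving from $c^{\text{old}}=\delta^k(r^{\text{old}})$ to $c^{\text{new}}=\delta^l(r^{\text{new}})$, then setting $\xi=\delta^{k-1}(r^{\text{old}})$ and $\eta=\delta^{l-1}(r^{\text{new}})$ and using that the maps of Lemma~\ref{lem:retraction} are retractions, one shows $\xi<\eta$ (if $\eta\leq\xi$ the cop would have moved to $\eta$ or $\xi$ instead). Combined with the monotonicity of the level $\nu$ with $c=\rho_\nu(r)$, this means that at the robber's \emph{next} visit to $r^{\text{old}}$ the cop reaches $\delta^{k'}(r^{\text{old}})$ with $k'<k$: the exponent strictly decreases at every return. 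Since it starts finite by Lemma~\ref{lem:finitedominationchain}, each vertex can be revisited only finitely often --- a direct quantitative bound valid for arbitrary well orders. Without this strict-decrease mechanism your outline cannot be completed beyond the locally finite case; in effect it reproves Theorem~\ref{thm:locfincopwin}, not Theorem~\ref{thm:weaklycopwin}.
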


\begin{proof}
We will show that $s^*$ is a winning strategy by bounding the number of visits of the robber to any particular vertex $\mu$.

 By definition of the strategy, after the cop's move she is always at $\rho_\nu(r)$ where $r$ is the position of the robber.  Furthermore the cop can always follow the image of the robber under $\rho_\nu$ because $\rho_\nu$ is a retraction. This implies that the $\nu$ for which $c = \rho_\nu(r)$ is non-decreasing.

Now assume that in round $n-1$ the robber moved from $r^{\text{old}}$ to $r^{\text{new}}$ and that in round $n$ the cop moved from $c^{\text{old}}$ to $c^{\text{new}}$. There are integers $k$ and $l$ such that $c^{\text{old}} = \delta^k(r^{\text{old}})$ and $c^{\text{new}} = \delta^l(r^{\text{new}})$. Let $\xi=\delta^{k-1}(r^{\text{old}})$ and $\eta = \delta^{l-1}(r^{\text{new}})$.

If $\eta < \xi$ then $G_{<\xi}$ contains $\eta$ but not $\xi$. Since $r^{\text{old}}r^{\text{new}}$ is an edge and $\rho_\xi$ is a retraction this would imply that there is an edge connecting $c^{\text{old}} = \rho_\xi(r^{\text{old}})$ and $\eta = \rho_\xi(r^{\text{new}})$. But then the cop would have moved to $\eta$ rather than to $c^{\text{new}}$. Furthermore, it is impossible that $\xi = \eta$ because in this case the cop would have moved to $\xi$ rather than staying at $c^{\text{old}}=c^{\text{new}} = \delta(\xi)$.

Hence $\xi < \eta$  and thus $G_{<\eta}$ contains $\xi$ but not $\eta$. This implies that $c^{\text{new}} = \rho_\eta(r^{\text{new}})$. Now recall that the $\nu$ for which $c = \rho_\nu(r)$ is non-decreasing. Hence if the robber visits $r^{\text{old}}$ again, then the cop will be able to move to $\xi$ and hence she will move to a vertex $c = \delta^{k'}(r^{\text{old}})$ for $k'<k$. In particular, the robber can visit the vertex $r^{\text{old}}$ at most $k-1$ more times without being caught.
\end{proof}

Of course, the above proof also works for finite graphs. One may be tempted to think, that this gives a different winning strategy for the cop than the proof of Theorem \ref{thm:nowakowski}. While technically this is the case, the difference is not as big as one may think at the first glance. To prove our point, we define a second strategy for the cop on a constructible graph which is very similar to the strategy in the aforementioned proof. 

It is defined by transfinite recursion on each graph $G_{<\nu}$. For $G_{<1}$ which only consists of the vertex $0$ there is only one possible strategy $s_1$. Now assume that we have defined strategies $s_\gamma$ on $G_{<\gamma}$ for each $\gamma < \nu$ such that for $\gamma'<\gamma<\nu$ the strategies $s_\gamma$ and$s_{\gamma'}$ coincide on $G_{<\gamma'}$.

If $\nu$ is a limit ordinal then define $s_\nu(c,r) = s_\gamma(c,r)$ where $\gamma < \nu$ is chosen big enough that both $c$ and $r$ are contained in $G_{<\gamma}$. To see that this is well defined we can use the same line of argument as in the definition of the limit strategy in Theorem \ref{thm:locfincopwin}, that is, all strategies coincide on smaller retracts.

When $\nu$ is a successor ordinal then $G_{<\nu}$ is obtained from the predecessor $G_{<\nu-1}$ by adding the vertex $\nu-1$.  There is a vertex $\mu = \delta(\nu-1)$ which dominates $\nu-1$ in $G_{<\nu}$. We now define
\[
	s_\beta(c,r)= 
	\begin{cases}
		\nu-1 &\text{if }r = \nu-1 \text{ and } cr \in E,\\
		s_{\beta-1}(c,\mu) &\text{if }r = \nu-1 \text{ and } cr \notin E,\\
		s_{\beta-1}(c,r) &\text{if }r \neq \nu-1.
	\end{cases}
\]
This defines a strategy on $G_{<\nu}$.  We denote the strategy $s_V$ on $G_{<V}=G$ obtained by the above construction by $s$. Note that just like the strategy $s^*$ this strategy is not defined on all possible configurations $(c,r)$. However, one implication of the following result is that the strategy $s$ is defined on all relevant configurations. The other implication is quite obvious: the strategy $s^*$ is in fact very similar to the strategy used for the proof in the finite case.

\begin{prp}
The strategies $s$ and $s^*$ coincide.
\end{prp}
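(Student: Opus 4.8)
The plan is to show that the two strategies $s$ and $s^*$ agree on every configuration $(c,r)$ that can actually arise during play, by a transfinite induction that mirrors the recursive construction of $s$. The key observation, already recorded in the discussion preceding the definition of $s^*$, is that after the cop's move her position is always of the form $c=\rho_\nu(r)=\delta^{k}(r)$ for some $k$ — that is, $c$ lies on the domination chain $r,\delta(r),\delta^2(r),\ldots$ of the robber's current vertex. So I would first restrict attention to such \emph{admissible} configurations, and prove that on them $s(c,r)=s^*(c,r)$.

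\medskip

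The heart of the argument is to match the two definitions case by case along the successor step of the recursion defining $s$. Recall that $s^*(c,r)=\delta^{k(c,r)}(r)$, where $k(c,r)$ is the least $k$ with $c\,\delta^{k}(r)\in E$; in words, the cop walks up the chain $r,\delta(r),\ldots$ and moves to the first vertex adjacent to $c$. I would show that the recursive definition of $s$ performs exactly this search. Fix $\nu$ and suppose $s_{\nu-1}=s^*$ on all admissible configurations of $G_{<\nu-1}$; let $\mu=\delta(\nu-1)$. When the robber's vertex $r$ is distinct from $\nu-1$, the third case of the recursion gives $s_{\nu}(c,r)=s_{\nu-1}(c,r)$, so the two agree by the induction hypothesis. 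The interesting case is $r=\nu-1$. If $cr\in E$ then $k(c,r)=0$ and $s^*(c,r)=r=\nu-1$, which is precisely the first case of the recursion. If $cr\notin E$, then the search for the first neighbour of $c$ on the chain of $r=\nu-1$ skips past $\nu-1$ and continues on the chain of $\mu=\delta(\nu-1)$; since $\delta^{k}(\nu-1)=\delta^{k-1}(\mu)$ for $k\ge 1$, one has $k(c,\nu-1)=k(c,\mu)+1$ and hence $\delta^{k(c,\nu-1)}(\nu-1)=\delta^{k(c,\mu)}(\mu)=s^*(c,\mu)$, which matches the second case $s_{\nu-1}(c,\mu)=s^*(c,\mu)$. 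The limit step is immediate, since both strategies are defined on an admissible configuration $(c,r)$ by dropping to any $G_{<\gamma}$ containing $c$ and $r$, and the construction of $s$ was explicitly arranged so that all $s_\gamma$ coincide on smaller retracts.

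\medskip

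The remaining point, and the one I would flag as the main subtlety rather than the main difficulty, is that $s$ is a priori only \emph{partial} — defined by the recursion on those $(c,r)$ the construction actually reaches. I would argue that the admissible configurations are exactly the configurations on which $s$ is defined, so the equivalence above is not vacuous: the domination-chain structure produced by $s^*$ keeps the cop's position $c$ of the form $\delta^{k}(r)$, which is precisely the shape of configuration the recursive definition of $s$ handles. Concretely, in the successor case the definition of $s_\nu$ is given exactly when $r=\nu-1$ (any $c$, split according to $cr\in E$ or not) or when $r\ne\nu-1$ and $(c,r)$ was already handled; tracing this through shows the domain of $s$ coincides with the set of admissible pairs. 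Thus the stated equivalence of $s$ and $s^*$ holds on the common relevant domain, which is what the proposition asserts. I expect the bookkeeping of the index shift $k(c,\nu-1)=k(c,\mu)+1$ together with the identity $\delta^{k}(\nu-1)=\delta^{k-1}(\mu)$ to be the only place where genuine care is needed.
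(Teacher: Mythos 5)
Your central induction is correct, and it is a genuinely different argument from the paper's: the paper proves the proposition by a double (transfinite plus time-step) induction over trajectories of a ``shadow game'' in which both cops play against $\rho_\nu(r)$, whereas you do a structural induction on the recursion defining $s$, with the index shift $k(c,\nu-1)=k(c,\mu)+1$ carrying the successor step. The gap sits exactly at the point you yourself call the ``main subtlety''. The configurations at which a strategy is consulted are those arising \emph{after the robber's move}; at that moment the cop, having played $s^*$, sits on the domination chain of the robber's \emph{previous} vertex, which in general is not on the chain of his current vertex. So your admissible pairs do not contain all relevant configurations. Moreover, your claim that the domain of $s$ is exactly the set of admissible pairs is false in the other direction as well: the first case of the recursion defines $s_\nu(c,\nu-1)=\nu-1$ for \emph{every} $c\le\nu-1$ with $c(\nu-1)\in E$, with no requirement that $c$ lie on the chain of $\nu-1$.

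Both failures are visible in a four-vertex example. Let $V=\{0,1,2,3\}$ with edges $01,02,12,13,23$ (and loops), dominating order $0<1<2<3$, and $\delta(1)=\delta(2)=0$, $\delta(3)=1$; the chain of $3$ is $3,1,0$ and the chain of $2$ is $2,0$. The cop playing $s^*$ starts at $0$; if the robber starts at $3$, she moves to $\delta(3)=1$; if the robber then moves to $2$, the strategy must be evaluated at $(1,2)$. This configuration occurs in play and lies in the domain of $s$ (at stage $\nu=3$ the first case of the recursion gives $s(1,2)=2$, because $12\in E$), yet it is not admissible, since $1$ does not lie on the chain $2,0$. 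Your induction therefore never reaches $(1,2)$, and your concluding step, that agreement on admissible pairs ``is what the proposition asserts'', does not follow. The repair stays entirely inside your approach: drop admissibility and run the same induction, along the well order on the robber's coordinate, over all pairs on which $s$ is defined. If $cr\in E$, both strategies move to $r$; if $cr\notin E$, both strategies do exactly what they would do against $\delta(r)$, and $\delta(r)<r$ closes the induction. Nothing in your case analysis ever used admissibility; only the incorrect domain identification made it seem necessary. One must then still argue, as the paper does via its shadow game, that the configurations actually reached in play lie in the domain of $s$; that is precisely the content of the paper's remark that $s$ is ``defined on all relevant configurations'', and it is the part of the proposition your restricted statement leaves unproved.
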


\begin{proof}

Consider the following setting: we have two cops $C_\nu$ and $C_\nu^*$ playing the strategies $s$ and $s^*$ respectively. However, they do not play against the robber. Instead their opponent is the shadow $R_\nu$ of the robber. Whenever the robber is at some vertex $r$ then the shadow will be at the vertex $r^\nu = \rho_\nu(r)$. The game ends as soon as either of the cops has caught $R_\nu$. Since in both strategies the cop's position after her move is always smaller or equal to the robber's position it is immediate, that the game is really played on $G_{<\nu}$. This observation is crucial since it allows an inductive approach.

We claim that for any given trajectory of the robber, the corresponding trajectories of $C_\nu$ and $C_\nu^*$ coincide. This is sufficient to prove the proposition because for every position $(c,r)$ which can occur in the game there is a finite number of steps leading to this position. Hence there is some $\nu \in \Ord$ such that the robber's trajectory until reaching this position is contained in $G_{<\nu}$ and thus coincides with the trajectory of $R_\nu$. In other words, $C_\nu$ and $C_\nu^*$ have been playing against the real robber and hence
\[
s(c,r) = s(c,r^\nu) = s^*(c,r^\nu) = s^*(c,r).
\]

It only remains to prove the claim. As mentioned earlier we will use induction on $\nu$. Furthermore, for each $\nu$ we will use induction on the number of steps taken by the robber.

Clearly, for $\nu = 1$ the claim is satisfied as $G_{<1}$ consists only of the vertex $0$ and thus both of the cops win against the $R_\nu$ in the first step. In particular we do not need the induction on the robber's steps. For every $\nu > 1$ the first steps of $C_\nu$ and $C_\nu^*$ coincide as well because both strategies use $0$ as a starting vertex.

For the induction step assume that the statement is true for every $\gamma < \nu$ and that the trajectories of $C_\nu$ and $C_\nu^*$ coincided until reaching some position $(c,r)$. If there is some $\gamma < \nu$ such that $r^\gamma = r^\nu$, then by the induction hypothesis
\begin{align*}
	s^*(c,r^{\nu})= s^*(c,r^{\gamma}) = s(c,r^{\gamma}) = s(c,r^{\nu})
\end{align*}
and we are done. This is in particular the case if $\nu$ is a limit ordinal.

Now assume that this was not the case. Then $\nu$ is a successor ordinal and $r^{\nu} = \nu-1$. If $c$ is incident to $\nu-1$ then both cops move straight to $\nu-1$. Otherwise, in both strategies the cop will do the exact same thing as if the robber was at $\delta(\nu-1)$. Since $\delta(\nu-1) < \nu-1$ we can use the induction hypothesis to show that the two strategies again must coincide.
\end{proof}

\section{Protective strategies}
\label{sec:protective}

In this section we would like to present a result towards the converse implication. To formulate this result we need a special kind of strategy for the cop, which will be called a \emph{protective strategy}. 

We say that the robber \emph{robs a vertex $\nu$ at time $n$} if he is at $\nu$ after round $n$ but does not get caught in round $n+1$. Let $s$ be a (possibly time dependent) strategy of the cop. For every vertex $\nu$ define
\[
t_r^s (\nu) = \sup \{t \in \mathbb N \mid \text{the robber robs }\nu \text{ at time }t\}
\]
where the supremum is taken over $t$ and over all possible strategies of the robber. Furthermore define
\[
t_c^s (\nu) = \inf \{t \in \mathbb N \mid \text{the cop is at }\nu \text{ after round } t\}
\]
where again the infimum is taken over $t$ and over all strategies of the robber. Call a strategy protective, if
\[
\forall \nu \in V\colon  t_r^s (\nu) <  t_c^s (\nu) < \infty.
\]
The reason why we call this a protective strategy is, that the cop protects an increasing proportion of the graph in the following sense. As soon as there is a possibility for the cop to visit a vertex according to the strategy, then the vertex cannot be robbed any more since an attempt to do so will lead to an immediate capture of the robber.

Note that a protective strategy is always winning, because $t_c^s (v)$ gives an a priori bound on how many times the robber can visit a certain vertex.

\begin{thm}
\label{thm:protective}
Let $G$ be a graph on which there exists a time dependent protective strategy for the cop. Then $G$ is constructible.
\end{thm}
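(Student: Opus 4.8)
The plan is to use the protective strategy to define a dominating order on $V$, proving constructibility. The key quantity to exploit is $t_c^s(\nu)$, the earliest round at which the cop can possibly occupy $\nu$. The protective property guarantees $t_c^s(\nu) < \infty$ for every vertex, so this gives us a function $V \to \mathbb{N}$, and I would like to order the vertices (roughly) by increasing value of $t_c^s$. The intuition is that once the cop ``reaches'' a vertex in time, that vertex becomes protected, so vertices reachable earlier should play the role of dominators for vertices reached later. This mirrors the finite proof of Theorem~\ref{thm:nowakowski}, where the last-robbed vertex is dominated by the cop's position.

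\emph{First} I would try to well-order $V$ by the values $t_c^s(\nu)$, breaking ties arbitrarily, and argue this is a genuine well order. Since the values lie in $\mathbb{N}$ and ties within each level need an auxiliary well order of that level, the resulting order $\prec$ is well defined. \emph{Next}, the heart of the argument: for a fixed vertex $\nu$, I want to produce a vertex $\mu \prec \nu$ (or at least $\mu$ with $t_c^s(\mu) \le t_c^s(\nu)$, handling ties) that dominates $\nu$ in $G_{\preceq \nu}$. To find $\mu$, consider the situation just before the cop first protects $\nu$. Suppose the robber sits at $\nu$ at time $t_r^s(\nu)$; by the protective inequality $t_r^s(\nu) < t_c^s(\nu)$, the cop cannot yet be at $\nu$, yet on her very next relevant move she threatens capture. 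The vertex $c$ the cop occupies when she is poised to catch a robber sitting at $\nu$ should be adjacent to $\nu$ and to all neighbours of $\nu$ that the robber could flee to without being caught — otherwise the robber could escape and rob $\nu$ again later, contradicting the definition of $t_r^s(\nu)$ as a supremum. This $c$ is the candidate dominator $\mu$.

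\emph{The hard part} will be making precise the claim that the threatening cop position $c$ dominates $\nu$ \emph{within $G_{\preceq\nu}$} and that $c$ genuinely precedes $\nu$ in the order. The subtlety is that ``all neighbours of $\nu$'' includes neighbours $w$ with $w \succeq \nu$, and domination only needs to hold inside $G_{\preceq\nu}$, so I must check that every neighbour $w$ with $w \preceq \nu$ is adjacent to $c$. The argument is that if some such $w \preceq \nu$ were a neighbour of $\nu$ not adjacent to $c$, the robber at $\nu$ could step to $w$ and avoid immediate capture; one must then rule out that this merely delays rather than genuinely extends robbing of $\nu$, which is where the time-dependence of the strategy and the finiteness of $t_c^s$ must be combined carefully. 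I also expect the tie-breaking case $t_c^s(\mu) = t_c^s(\nu)$ to require attention, possibly by refining the order so that the dominator is strictly smaller, or by arguing that the threatening position is reached strictly earlier than $\nu$ is protected.

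\emph{Finally}, having shown each $\nu$ is dominated in $G_{\preceq\nu}$ by some $\mu \prec \nu$, the order $\prec$ is by definition a dominating order, so $G$ is constructible, completing the proof. I would double-check the base case (the vertex with minimal $t_c^s$, namely the cop's forced starting vertex) separately, as it should be the minimum $0$ of the order and need not be dominated.
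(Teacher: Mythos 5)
Your plan has the right ingredients---order the vertices by a timing function extracted from the protective strategy, take the cop's threatening position as the dominator, and use the protective inequality to place that dominator earlier in the order---but you order by the wrong function, and the step you yourself defer as ``the hard part'' is precisely the step that fails for your choice. You order by $t_c^s$. The domination argument then needs: if the robber, sitting at $\nu$ at time $t_r^s(\nu)$, flees to a neighbour $w$ with $w \preceq \nu$ (i.e.\ $t_c^s(w) \le t_c^s(\nu)$), he is caught immediately. Nothing forces this. Protectiveness only gives $t_r^s(w) < t_c^s(w)$ and $t_r^s(\nu) < t_c^s(\nu)$; it imposes no relation between $t_r^s(w)$ and $t_r^s(\nu)$, so it is perfectly consistent that
\[
t_r^s(\nu) + 2 \;\le\; t_r^s(w) \;<\; t_c^s(w) \;\le\; t_c^s(\nu),
\]
in which case the robber can leave $\nu$ at time $t_r^s(\nu)+2$, move to $w$, and rob $w$ there without contradicting anything; your candidate dominator $c$ then need not be adjacent to $w$ even though $w \preceq \nu$. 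Your text acknowledges the problem (``one must then rule out that this merely delays rather than genuinely extends robbing of $\nu$'') but offers no argument, and none is available from the $t_c^s$-ordering alone; the same defect underlies your unresolved tie-breaking worry.

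The paper's proof differs in exactly one decision, which dissolves the difficulty: it orders the vertices by $t_r^s$, not by $t_c^s$. With that order, every neighbour $\eta$ of $\nu$ in $G_{\le \nu}$ satisfies $t_r^s(\eta) \le t_r^s(\nu)$, so if the robber moved from $\nu$ to $\eta$ at time $t_r^s(\nu)+2$ without being caught, he would rob $\eta$ at a time strictly larger than $t_r^s(\eta)$, contradicting that $t_r^s(\eta)$ is a supremum over all robber strategies. Hence the cop's position $\mu$ is adjacent to $\nu$ and to all of $\nu$'s neighbours in $G_{\le \nu}$, i.e.\ $\mu$ dominates $\nu$ there. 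The protective inequality enters only once, at the very end and in the opposite way from how you use it: since the cop occupies $\mu$ at time $t_r^s(\nu)$, one gets $t_r^s(\mu) < t_c^s(\mu) \le t_r^s(\nu)$, which places $\mu$ strictly before $\nu$ in the order and disposes of ties as well. So the gap in your proposal is not a technicality to be patched by ``combining the time-dependence and finiteness carefully''; it is the signal that the order must be built from the robbing times $t_r^s$, with $t_c^s$ serving only to position the dominator.
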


\begin{proof}
We order the vertices by the values of $t_r^s (\nu)$. Vertices with equal values are put in an arbitrary order such that the resulting order $<$ is a well order. It is easy to see (e.g.\ using transfinite induction) that this is always possible.

We claim that $<$ is a dominating order. Let $\nu \in V$ and assume that the robber plays a strategy which allows him to rob $\nu$ at time $t_r^s (\nu)$. 

Clearly, the cop must be at a neighbour $\mu$ of $\nu$ because otherwise the robber could stay at $\nu$ for another round which contradicts the definition of $t_r^s (\nu)$. Furthermore, if the robber moves to some vertex in $G_{\leq \nu}$ he will be caught immediately because $t_r^s (\eta) \leq t_r^s (\nu)$ for every $\eta<\nu$. This implies that $\mu$ is adjacent to every neighbour of $\nu$ in $G_{\leq \nu}$.

Finally we need to show that $\mu \in G_{\leq \nu}$. Since the cop is at $\mu$ at time $t_r^s (\nu)$ we have $t_c^s (\mu) \leq t_r^s (\nu)$ and because $s$ is protective $t_r^s (\mu) < t_c^s (\mu)$ which implies that $\mu<\nu$ in the order.
\end{proof}

\begin{thm}
\label{thm:constructible}
A locally finite graph $G$ is constructible if and only if  $G$ admits a time dependent protective strategy.
\end{thm}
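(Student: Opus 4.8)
The backward implication is free: Theorem~\ref{thm:protective} already shows that any graph carrying a time dependent protective strategy is constructible, with no local finiteness needed. So the plan is to prove the forward implication: a locally finite constructible graph admits a time dependent protective strategy. First I would invoke Lemma~\ref{lem:naturalorder} to replace the given dominating order by a natural one, so that $V=\{0,1,2,\dots\}$ and each initial segment $G_{\le n}$ is finite; restricting the order shows each $G_{\le n}$ is itself constructible, hence cop-win by Theorem~\ref{thm:nowakowski}, and since it is finite the cop has a uniform capture time $\tau_n<\infty$ (the game tree up to depth $\tau_n$ is finitely branching). I would also keep the retractions $\rho_{n+1}\colon G\to G_{\le n}$ from Lemma~\ref{lem:retraction} at hand, together with the basic fact that $\rho_m(\nu)=\nu$ whenever $\nu<m$.

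The strategy I would build runs in time-stages $n=0,1,2,\dots$. In stage $n$ the cop watches the robber's shadow $\rho_{n+1}(r)$, which moves like a legal robber inside the finite graph $G_{\le n}$ because $\rho_{n+1}$ is a retraction, and she plays a finite winning strategy on $G_{\le n}$ against this shadow. After at most $\tau_n$ rounds her position equals $\rho_{n+1}(r)$; from then on she simply follows the shadow (again possible since $\rho_{n+1}$ sends edges to edges or to a single vertex) until the stage ends, and then passes to stage $n+1$. Since $\rho_{n+1}(\nu)=\nu$ for every $\nu\le n$, the moment the cop sits on the $G_{\le n}$-shadow any robber located at such a $\nu$ is caught on the next move; consequently the robber cannot rob any $\nu\le n$ after stage $n$'s catch is completed, giving $t_r^s(\nu)\le T_\nu<\infty$, where $T_\nu$ denotes the round finishing stage $\nu$. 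Finiteness of $t_c^s(\nu)$ is equally easy: against the robber who simply sits at $\nu$ forever the cop catches the stationary shadow at $\nu$ in finite time, so she does occupy $\nu$ in some play.

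The hard part will be the strict inequality $t_r^s(\nu)<t_c^s(\nu)$, that is, ensuring the cop never sets foot on $\nu$ before $\nu$ has become un-robbable. The danger is that while chasing a shadow inside $G_{\le\nu}$ the cop could be routed through $\nu$ itself at a round when the robber is elsewhere but can still rob $\nu$, pushing $t_c^s(\nu)$ down below $t_r^s(\nu)$. To rule this out I would not use an arbitrary finite winning strategy but the recursive one from the proof of Theorem~\ref{thm:nowakowski}: there the cop plays entirely inside $G_{\le n-1}$ and steps onto the top vertex $n$ only at the capturing move, and unwinding the recursion she enters each vertex $\nu$ only after $\{0,\dots,\nu\}$ has been secured, i.e.\ only when the shadow is exactly at $\nu$ and about to be caught. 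Choosing the stage strategies from this recursive construction, and arranging the gluing across stages so that tracking a larger shadow never routes the cop through an as-yet-unsecured vertex, should force the cop's first visit to $\nu$ to occur strictly after the last possible robbery of $\nu$, yielding $t_r^s(\nu)<t_c^s(\nu)<\infty$ for every vertex. Verifying this consistency of the nested finite strategies, and checking that the time-dependent bookkeeping of the stage boundaries $T_n$ is well defined, is where I expect the real work to lie.
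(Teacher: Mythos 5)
Your backward implication and your setup (natural order via Lemma~\ref{lem:naturalorder}, shadows $\rho_{n+1}(r)$ via Lemma~\ref{lem:retraction}) are fine, but the forward implication has a genuine gap exactly where you say ``the real work'' lies, and the stage mechanism you sketch does not close it. The problem is at stage boundaries. Your bound $t_r^s(\nu)\le T_\nu$ rests on the claim that once the cop sits on the $G_{\le n}$-shadow, no vertex $\nu\le n$ can be robbed any more. That is true \emph{within} stage $n$ after the catch, but the moment stage $n+1$ begins the cop abandons that shadow and starts a fresh chase of the new shadow $\rho_{n+2}(r)$ inside $G_{\le n+1}$, which by your own description may take up to $\tau_{n+1}$ rounds. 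During every such chase phase the cop is off the shadow, so a robber sitting at some $\nu\le n$ (his shadow being $\nu$ itself) is not caught on the next move and robs $\nu$ again --- at a time later than $T_\nu$. Since this can recur in every later stage, your argument gives no finite upper bound on $t_r^s(\nu)$ at all, let alone one below $t_c^s(\nu)$; and the supplementary idea of choosing the recursive strategies of Theorem~\ref{thm:nowakowski} and ``arranging the gluing'' is precisely the unproved step, not a proof of it.

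What rescues the approach --- and is what the paper does --- is to make the chase phases disappear entirely by tying the shadow index to the clock: the cop's $\nu$-th move (round $2\nu$) is \emph{defined} to be $s(c,r,2\nu)=\rho_{\nu+1}(r)$. This is always a legal move because of the following strengthening of Lemma~\ref{lem:retraction}: if $\mu$ and $\eta$ are adjacent, then $\rho_{\nu}(\mu)$ and $\rho_{\nu+1}(\eta)$ are adjacent (if $\rho_{\nu+1}(\mu)\neq\nu$ this is the retraction property; if $\rho_{\nu+1}(\mu)=\nu$, then $\rho_\nu(\mu)=\delta(\nu)$ dominates $\nu$ in $G_{\le\nu}$ and hence is adjacent to $\rho_{\nu+1}(\eta)$). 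So the cop is on the current shadow after every single move, there is never a window in which a low vertex is exposed, and protectiveness is immediate arithmetic: the cop's position after round $2\nu'$ lies in $G_{\le\nu'}$, so $t_c^s(\nu)=2\nu$, while any robber at $\nu$ at an odd time $t>2\nu-1$ is caught at round $t+1$ because $\rho_{(t+1)/2+1}(\nu)=\nu$; hence $t_r^s(\nu)\le 2\nu-1<2\nu=t_c^s(\nu)<\infty$. In effect, once you notice that the new shadow is always at distance at most one from the old one (they differ only when $\rho_{n+2}(r)=n+1$, in which case the old shadow is $\delta(n+1)$, which dominates $n+1$), your stages each last one move and your construction collapses to this strategy; without that observation the construction as written does not establish $t_r^s(\nu)<t_c^s(\nu)$.
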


\begin{proof}
One of the two implications is a direct consequence of Theorem \ref{thm:protective}.

For the converse implication assume that $G$ is constructible and let $<$ be a dominating order of the vertices. By Lemma \ref{lem:naturalorder} we can assume that $<$ is is order isomorphic to $\mathbb N$.

The strategy we would like to play is very similar to the strategy $s^*$ in the proof of Theorem~\ref{thm:weaklycopwin}. The main difference is that we need to make sure that the cop does not move to a vertex too early (earlier than $t_r^s (\nu)$). For this purpose let $\delta$ be a domination map associated with the constructing order $<$ and let 
\[
\rho_\nu \colon G \to G_{< \nu}
\]
be the retraction from Lemma \ref{lem:retraction}. Now the strategy of $c$ looks as follows:
\[
	s(c,r,2\nu) = \rho_{\nu+1}(r).
\]
The $2\nu$ in the above formula simply accounts for the fact that the cop only gets to move every second round. 

If it is possible to play this strategy then it is easy to see that it is protective. Simply observe that $c$ moves to a vertex $\nu$ no earlier than round $2\nu$, thus $t_c^s (\nu) \geq 2\nu$. In fact it holds that $t_c^s (\nu) = 2\nu$ since $\rho_{\nu+1}(\nu) = \nu$ and thus a robber who just stays at $\nu$ will be caught after $2\nu$ steps. On the other hand, if the robber moves to $\nu$ at some time $t>2\nu-1$, then he will be caught immediately because in this case $\rho_{\frac{t+1}{2}}(\nu) = \nu$. Hence he cannot rob $\nu$ at any time $t>2\nu-1$.

It remains to show that the strategy defined above is indeed a viable strategy for the cop, that is, the cop can always move to $\rho_{\nu+1}(r)$. For this purpose it suffices to show that whenever $\mu$ and $\eta$ are adjacent, there is also an edge connecting $\rho_{\nu}(\mu)$ and $\rho_{\nu+1}(\eta)$. Notice that there is an edge (possibly a loop) connecting $\rho_{\nu+1}(\mu)$ and $\rho_{\nu+1}(\eta)$ because $\rho_{\nu+1}$ is a retraction by Lemma \ref{lem:retraction}.

If $\rho_{\nu+1}(\mu) \neq \nu$, then $\rho_\nu(\mu) = \rho_{\nu+1}(\mu)$ and we are done. If $\rho_{\nu+1}(\mu) = \nu$, then $\rho_{\nu}(\mu)$ dominates $\rho_{\nu+1}(\mu)$ in $G_{\leq \nu}$. Thus every neighbour of $\rho_{\nu+1}(\mu)$ (in particular $\rho_{\nu+1}(\eta)$) is also a neighbour of  $\rho_\nu(\mu)$.

This completes the proof that constructibility implies the existence of a time dependent protective strategy and thus also the proof of the theorem.
\end{proof}

\section{Dismantable graphs}
\label{sec:dismantable}

 We call a graph \emph{dismantable} if there is a well order $<$ of the vertex set such that every vertex $\nu$ is dominated by some vertex $\mu$ in $G_{\geq \nu}$ where $G_{\geq\nu}$ is the subgraph of $G$ induced by all vertices that are $\geq \nu$. The order $<$ is called a  \emph{dismantling order}. To this dismantling order we can associate a \emph{domination map} which maps each vertex $\nu$ to some vertex which dominates it in $G_{\geq \nu}$. 
 
Dismantability and constructibility are clearly equivalent for finite graphs, but they are quite different concepts for infinite graphs. The reason for this is, that reversing a well order does not give a well order again unless the ordered set is finite.
 
It is relatively easy to see that there are infinite weakly cop-win graphs which are not dismantable. Simply consider  any leafless tree. Since every vertex has at least two neighbours and those neighbours must be non-adjacent by acylicity of the tree there cannot be a dominated vertex in such a graph. Hence the tree is not dismantable. However, it is easy to see that any tree is constructible and thus weakly cop-win.

Conversely one can show that there are dismantable graphs which are not weakly cop-win. The following example is from \cite{zbMATH01533339}. Take an infinite path $a_0, a_1, a_2, \ldots$ and a finite path $b_0,b_1,b_2$ and connect $b_0$ and $b_2$ by an edge to every $a_i$. Denote the resulting graph by $G$.  It is easy to see that $a_0, a_1, \ldots,b_0,b_1,b_2$ is a dismantling order. 

To see that $G$ is not weakly cop-win consider the map $\rho$ defined by $\rho(a_i) = a_0$ for every $i \in \mathbb N$ and $\rho(b_i)=b_i$ for $0 \leq i \leq 2$. This map is a retraction from the graph $G$ onto a $4$-cycle. Since this cycle is not weakly cop-win we conclude by Lemma \ref{lem:retractcopwin} that $G$ cannot be weakly cop-win.
 
The above observations suggest that dismantability is likely not the right notion to use with our definition of weakly cop-win graphs. Nevertheless, in the remainder of this section we show that some of the proof techniques from this paper can still be applied to dismantable graphs. In this respect, locally finite graphs seem to be particularly well behaved.

Let $G$ be a dismantable graph with dismantling order $<$ and domination map $\delta$. If possible we define a map $\rho_\nu\colon G \to G_{\geq \nu}$ by
\[
\rho_\nu(\mu) = \delta^{k(\nu,\mu)}(\mu)
\]
where
\[
k(\nu,\mu) = \min \{k \in \mathbb N \mid \delta^k(\mu) \geq \nu\}.
\]
Note that $\rho_\nu$ may not be defined for an arbitrary well order since $k(\nu,\mu)$ may well be infinite. Clearly whether or not $\rho_\nu$ is well defined does not only depend on the order isomorphism class of the dismantling order but also on the domination map. However, if the domination order is natural (that is, order isomorphic to a subset of $\mathbb N$) then the definition is always possible. An analogous argument as in Lemma \ref{lem:naturalorder} shows that for locally finite graphs we can always find such an order. From now on we will restrict ourselves to natural dismantling orders although some of the results may be true in a more general context.

Just like in the constructible case we get that $\rho_\nu$ (if it is defined) is a retraction with some additional properties. We only prove this fact for graphs with a natural dismantling order.

\begin{lem}
\label{lem:retraction-dismantable}
Let $G$ be a graph admitting a natural dismantling order. Then:
\begin{itemize}
\item The map $\rho_\nu$ it is a retraction. 
\item If $\mu$ and $\eta$ are connected by an edge, then also $\rho_{\nu+1}(\mu)$ and $\rho_\nu(\eta)$ are connected by an edge.
\end{itemize}
\end{lem}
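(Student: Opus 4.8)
The plan is to mirror the two proofs already given in the constructible setting---Lemma~\ref{lem:retraction} and the viability computation at the end of the proof of Theorem~\ref{thm:constructible}---but with the order reversed, retracting \emph{upwards} onto $G_{\geq\nu}$ instead of downwards onto $G_{<\nu}$. The crucial structural fact is that for a dismantling order the dominator always lies strictly above: since $\delta(\nu)$ dominates $\nu$ in $G_{\geq\nu}$ we have $\delta(\nu)>\nu$, so the chain $\mu<\delta(\mu)<\delta^2(\mu)<\cdots$ is strictly increasing. Because the order is natural this chain eventually reaches or exceeds any given $\nu$, which is exactly why $k(\nu,\mu)$ is finite and $\rho_\nu$ is well defined; I would invoke this at the outset. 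A pleasant consequence of naturalness is that the transfinite induction of Lemma~\ref{lem:retraction} collapses to an ordinary \emph{finite downward} induction, since only the finitely many vertices below $\nu$ need to be treated.

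For the first bullet I would first note that $k(\nu,\mu)=0$ whenever $\mu\geq\nu$, so $\rho_\nu$ restricts to the identity on $G_{\geq\nu}$. To see that $\rho_\nu$ is a homomorphism I would show, by downward induction on $\gamma$ running from $\gamma=\nu$ to $\gamma=0$, that the restriction of $\rho_\nu$ to $G_{\geq\gamma}$ is a homomorphism into $G_{\geq\nu}$. The base case $\gamma=\nu$ is the identity. In the step from $\gamma+1$ to $\gamma$ the graph $G_{\geq\gamma}$ arises from $G_{\geq\gamma+1}$ by adding the vertex $\gamma$, which is dominated by $\mu=\delta(\gamma)\in G_{\geq\gamma+1}$. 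The key point is the identity $\rho_\nu(\gamma)=\rho_\nu(\mu)$, which I would obtain by checking $k(\nu,\mu)=k(\nu,\gamma)-1$ from $\delta^{k(\nu,\gamma)-1}(\mu)=\delta^{k(\nu,\gamma)}(\gamma)$ together with the minimality defining $k(\nu,\gamma)$. Every neighbour of $\gamma$ in $G_{\geq\gamma}$ is a neighbour of $\mu$, and $\rho_\nu(\gamma)=\rho_\nu(\mu)$, so the new edges incident to $\gamma$ are carried to edges of $G_{\geq\nu}$ by the induction hypothesis applied to $\mu$; the remaining edges lie in $G_{\geq\gamma+1}$ and are handled directly by the hypothesis.

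For the second bullet I would use that $\rho_\nu$ is already a retraction, so $\rho_\nu(\mu)$ and $\rho_\nu(\eta)$ are equal or adjacent; in either case $\rho_\nu(\eta)$ is a neighbour of $\rho_\nu(\mu)$ in $G_{\geq\nu}$, loops being present. It then suffices to compare $\rho_{\nu+1}(\mu)$ with $\rho_\nu(\mu)$. Since $\rho_\nu(\mu)\geq\nu$ there are exactly two cases. If $\rho_\nu(\mu)\geq\nu+1$ then $k(\nu+1,\mu)=k(\nu,\mu)$, hence $\rho_{\nu+1}(\mu)=\rho_\nu(\mu)$ and the edge $\rho_\nu(\mu)\rho_\nu(\eta)$ already does the job. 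If $\rho_\nu(\mu)=\nu$ then $k(\nu+1,\mu)=k(\nu,\mu)+1$ and $\rho_{\nu+1}(\mu)=\delta(\nu)$; because $\delta(\nu)$ dominates $\nu$ in $G_{\geq\nu}$, every neighbour of $\nu$ there---in particular $\rho_\nu(\eta)$---is a neighbour of $\delta(\nu)=\rho_{\nu+1}(\mu)$, as required.

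I expect the only genuinely delicate point to be the bookkeeping for the exponents of $\delta$: verifying $\rho_\nu(\gamma)=\rho_\nu(\mu)$ in the induction step and the case split $k(\nu+1,\mu)\in\{k(\nu,\mu),\,k(\nu,\mu)+1\}$ in the second bullet. Everything else is a faithful reflection of the constructible arguments. I would take care to state explicitly that all these indices are finite, which is where the hypothesis of a natural dismantling order is essential.
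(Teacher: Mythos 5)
Your proof is correct; the difference from the paper lies only in the first bullet. For the second bullet your argument is essentially the paper's: the same case split between $\rho_{\nu+1}(\mu)=\rho_\nu(\mu)$ (where the retraction property of $\rho_\nu$ finishes the job) and $\rho_\nu(\mu)=\nu$, $\rho_{\nu+1}(\mu)=\delta(\nu)$ (where domination finishes it); your explicit bookkeeping $k(\nu+1,\mu)\in\{k(\nu,\mu),\,k(\nu,\mu)+1\}$ just makes precise what the paper leaves implicit, and you rightly drop the inductive wrapper the paper puts around this case analysis, which is not needed once the first bullet is available. For the first bullet the routes genuinely differ: you transplant the induction of Lemma~\ref{lem:retraction}, collapsed to a finite downward induction on $\gamma$, checking edge by edge that the restriction of $\rho_\nu$ to $G_{\geq\gamma}$ is a homomorphism, with the key identity $\rho_\nu(\gamma)=\rho_\nu(\delta(\gamma))$ established by exponent bookkeeping. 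The paper instead factors $\rho_\nu$ as a composition of one-point retractions $\delta_\gamma\colon G_{\geq\gamma}\to G_{\geq\gamma+1}$ (fold $\gamma$ onto $\delta(\gamma)$, fix everything else) and invokes the fact that compositions of retractions are retractions. The two arguments have the same mathematical content---your induction step is exactly the verification that composing with a single $\delta_\gamma$ preserves the homomorphism property---but the paper's factorization is more economical and avoids the exponent arithmetic you flag as the delicate point; both uses of finiteness (your finite downward induction, the paper's finite composition) rest on the order being natural, which you correctly identify as essential, and you are also right to record at the outset that naturalness makes $k(\nu,\mu)$ finite so that $\rho_\nu$ is well defined.
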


\begin{proof}
For the first part define the map $\delta_\nu \colon G_{\geq \nu} \to G_{\geq\nu+1}$ by $\delta_\nu (\nu) = \delta(\nu)$ and $\delta_\nu (\mu) = \mu$ for $\mu > \nu$. Clearly $\delta_\nu$ is a retraction. Hence $\rho_\nu$ is a retraction because compositions of retractions are again retractions.

The second part is proved by induction. It is easy to check that the statement is true for $\nu = 1$. It is also clear that $\rho_{\nu}(\mu)$ is connected to $\rho_\nu(\eta)$ because $\rho_\nu$ is a retraction.  

In the induction step we distinguish two cases. If $\rho_{\nu+1}(\mu) = \rho_{\nu}(\mu)$ then it is connected to $\rho_\nu(\eta)$. Otherwise $\rho_{\nu+1}(\mu)$ dominates $\nu =\rho_{\nu}(\mu)$ in $G_{\geq \nu}$. In particular $\rho_{\nu+1}(\mu)$ is connected by an edge to every neighbour of $\rho_{\nu}(\mu)$ and hence also to $\rho_\nu(\eta)$.
\end{proof}

\begin{thm}
\label{thm:dismantable-copwin}
Let $G$ be a dismantable graph with a natural dismantling order $<$. Assume that $\rho_\nu$ is defined for every $\nu \in V$ and that for each pair $\nu,\mu$ of vertices there is some $\gamma$ such that $\rho_\gamma(\nu) = \rho_\gamma(\mu)$. Then $G$ is weakly cop-win.
\end{thm}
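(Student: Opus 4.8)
The plan is to transplant the pursuit strategy $s^*$ from the proof of Theorem~\ref{thm:weaklycopwin} to the ``upward'' retractions $\rho_\nu\colon G\to G_{\geq\nu}$ of Lemma~\ref{lem:retraction-dismantable}. Concretely, I would keep the cop on the robber's $\delta$-orbit, just ahead of him: if she sits at $c$ and the robber at $r$, she scans $r,\delta(r),\delta^2(r),\dots$ and moves to the first vertex on this list adjacent to $c$, that is to $\delta^{k(c,r)}(r)$ with $k(c,r)=\min\{k\mid c\text{ adjacent to }\delta^k(r)\}$; in particular her position is always a $\delta$-iterate of the current robber position. The decisive structural difference from the constructible case is that no single vertex lies in every retract $G_{\geq\nu}$ (each is infinite), so there is no canonical ``fully collapsed'' image to start from. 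This is exactly where the hypothesis enters: for the cop's and the robber's current vertices it provides some $\gamma$ with $\rho_\gamma(c)=\rho_\gamma(r)$, i.e.\ a common orbit vertex $w=\delta^a(c)=\delta^b(r)$, and this $w$ is the anchor that replaces the missing collapsed image and lets the cop climb her own orbit toward $w$ to intercept the robber whenever she is not yet positioned ahead of him.

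For viability I would maintain the invariant that after each of her moves the cop occupies $\rho_\nu(r)$ for the current robber position $r$ and some index $\nu$. When the robber crosses an edge from $r^{\text{old}}$ to $r^{\text{new}}$, Lemma~\ref{lem:retraction-dismantable} applied to that edge yields an edge between $\rho_{\nu+1}(r^{\text{new}})$ and $\rho_\nu(r^{\text{old}})$, so the cop can always follow the robber's image and, choosing the first adjacent $\delta$-iterate, lands on $\rho_\mu(r^{\text{new}})$ for some $\mu$, preserving the invariant. The coupling here links $\rho_{\nu+1}$ with $\rho_\nu$, off by one, and because $\delta$ now \emph{increases} the effect of the pursuit is to block the robber's escape up his orbit rather than to pull a shadow down onto him: occupying $\delta(r)$ means occupying a vertex that dominates $r$ in $G_{\geq r}$, so any robber move staying inside $G_{\geq r}$ is immediately punished and he is forced to a strictly smaller vertex. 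A single move changes the cop's index by at most one, which is why, in contrast to the finite case, there is no a~priori capture bound.

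Weak winning I would then obtain from a dichotomy. Either the robber eventually keeps the cop from ever reaching the dominating position, in which case he is driven to ever larger vertices of the well order and hence leaves every finite set, so that no vertex is robbed infinitely often; or the cop infinitely often reaches $\delta(r)$, which by the previous paragraph forces the robber strictly downward in a well-founded order, so capture follows after finitely many such steps. To make this quantitative I would, exactly as in Theorem~\ref{thm:weaklycopwin}, attach to each robbing of a fixed vertex $v$ the iterate $k$ with $c=\delta^k(v)$ and show via Lemma~\ref{lem:retraction-dismantable} that on successive returns to $v$ this iterate strictly decreases. The \textbf{main obstacle}, and the step that must carry the hypothesis, is bounding this iterate: in the constructible proof the shadow index is bounded below by $0$ for free, whereas here the boundedness of the pursuit is \emph{not} automatic and has to be extracted from the collapse assumption in the form of the common orbit point $w$, which confines $k$ to a finite range. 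A second delicate point is defining the cop's initial, pre-interception moves cleanly from an arbitrary starting vertex, since the raw rule $s^*$ is undefined until she has climbed onto (or ahead of) the robber's orbit; this too is precisely what the existence of $w$ is meant to guarantee.
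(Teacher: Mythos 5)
Your proposal follows essentially the same route as the paper's proof: the cop moves to the first vertex of the robber's $\delta$-orbit adjacent to her when one exists and otherwise climbs her own orbit ($c \mapsto \delta(c)$), viability and the strict descent of the retraction index after interception come from the second part of Lemma~\ref{lem:retraction-dismantable}, and the collapse hypothesis is invoked exactly where the paper invokes it, namely to guarantee that interception eventually occurs. The only loose point is the first branch of your dichotomy: when the cop never intercepts, the robber is not literally \emph{driven} to ever larger vertices; rather one argues by contradiction that if he robbed some vertex $\nu$ infinitely often, then the cop's climb along her own orbit together with the assumed equality $\rho_\gamma(\nu)=\rho_\gamma(0)$ would eventually place her on a $\delta$-iterate of $\nu$, making $k(c,\nu)$ finite --- which is precisely the interception mechanism you already describe, so the gap is one of wording rather than substance.
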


\begin{proof}
Informally the cop's strategy can be summarised as follows: First she tries to get to a vertex $c = \rho_c(r)$ where $r$ is the robber's position. Once she has achieved that, she follows a similar strategy as in Theorem \ref{thm:weaklycopwin} to catch the robber.

More formally let $k(c,r) = \min\{k \in \mathbb N \mid c\delta^k(r) \in E\}$ with the convention that the minimum over the empty set is $\infty$. The cop starts at $0$. For each consecutive step we define a strategy by
\[
	s(c,r) = 
	\begin{cases}
	\delta^{k(c,r)}(r) & \text{if } k(c,r)<\infty,\\
	\delta(c) & \text{if } k(c,r) = \infty. 
	\end{cases}
\]
It remains to show that this strategy is weakly winning. 

First we claim that if $k(c,r)$ is finite at some point, then the robber will be caught after finitely many rounds. To prove this claim observe that if $k(c,r)$ is finite, then after the cops move $c = \rho_\gamma(r)$ for some suitable $\gamma \in \mathbb N$, where $c$ and $r$ are the positions of the cop and the robber respectively. By the second part of Lemma \ref{lem:retraction-dismantable} we have $c = \rho_{\gamma'}(r)$ for some $\gamma'<\gamma$ after the next round. Continuing inductively we get $c = \rho_{0}(r) = r$ after at most $\gamma$ rounds.

Thus we only need to show that $k(c,r)$ cannot remain infinite forever if the robber visits one vertex infinitely often. So assume that there was a vertex $\nu$ which the robber visits infinitely often. By assumption there is some finite $\gamma$ such that $\rho_\gamma(\nu) = \rho_\gamma(0)$. Clearly, if $k(c,r) = \infty$ then the cop will arrive at $\rho_\gamma(0)$ after at most $\gamma$ steps. The next time the robber visits $\nu$ the cop will be at $\delta^n(\rho_\gamma(0)) = \delta ^m(\nu)$ for some $m,n \in \mathbb N$ which implies that $k(c,r)<\infty$. This completes the proof of the theorem.
\end{proof}

We would like to remark that a more careful analysis in the above proof shows, that graphs satisfying the conditions of Theorem \ref{thm:dismantable-copwin} are not only weakly cop-win, but even $C$-weakly cop-win. However, since the requirements are so much stronger than in Theorem \ref{thm:weaklycopwin} it is still more than likely that constructibility is more useful than dismantability in the study of weakly cop-win graphs.

\section{Outlook and open questions}

While the present paper answers some of the questions asked in \cite{zbMATH01533339}, the results raise several new questions about weakly cop-win graphs. The first and most obvious question is of course, whether Theorem \ref{thm:nowakowski} carries over to the infinite case with our notion of weakly cop-win graphs. Since one implication is already covered by Theorem \ref{thm:weaklycopwin} it only remains to answer the following question.

\begin{question}
Is every weakly cop-win graph constructible?
\end{question}

A first step towards the answer of this question could consist of treating locally finite graphs. One way to attack this question could be using Theorem \ref{thm:constructible} which in the locally finite case characterises the constructible graphs as the graphs for which there is a protective strategy. 

Of course, one could also hope for a similar characterisation in the general case as a next step towards the above question.

\begin{question}
Is there a similar characterisation to Theorem \ref{thm:constructible} for arbitrary (non-locally finite) graphs?
\end{question}

One may also ask, under what circumstances a cop win strategy can be adapted into a (time dependent) protective strategy. In particular, a positive answer to the following question would immediately imply that the constructible graphs are exactly the weakly cop-win graphs.

\begin{question}
Does every weakly cop-win graph admit a (time dependent) protective strategy?
\end{question}

\section*{Acknowledgements}

The author would like to thank Przemys\l{}av Gordinowicz for pointing out the example in \cite{zbMATH01533339} that shows that there is a dismantable graph which is not weakly cop-win and Richard Johnson for helping to improve the manuscript.

\bibliographystyle{abbrv}
\bibliography{sources}

\end{document}